\newcommand{\EX}{\mathbb{E}}
\newtheorem{theorem}{Theorem}
\newtheorem{claim}{Claim}
\theoremstyle{remark}
\newtheorem{remark}{Remark}
\newcommand{\AC}[1]{{ #1}}
\title{\LARGE \bf
Dynamic Resource Allocation to Minimize Concave Costs of Shortfalls
(Extended Version)
}
\def\BibTeX{{\rm B\kern-.05em{\sc i\kern-.025em b}\kern-.08em
    T\kern-.1667em\lower.7ex\hbox{E}\kern-.125emX}}
\author{Akhil Bhimaraju, Avhishek Chatterjee, and Lav R. Varshney, \IEEEmembership{Senior Member, IEEE}
\thanks{A. Bhimaraju and L. R. Varshney are with the University
of Illinois Urbana-Champaign (\{akhilb3,varshney\}@illinois.edu). A. Chatterjee is with the Indian Institute of Technology Madras (avhishek@ee.iitm.ac.in).}
\thanks{This work was supported by NSF grant ECCS-2033900, and the grants SERB/SRG/2019/001809 and DST/INSPIRE/04/2016/001171.}
}
\newcommand{\AlgName}{\ensuremath{\textsc{LinAlloc}}}
\newcommand{\NoFbAlgName}{\ensuremath{\textsc{SymAlloc}}}
\newcommand{\NewAfterReview}[1]{{ #1}}
\newcommand{\FinalVersion}[1]{{\color{black} #1}}
\begin{document}

\maketitle
\thispagestyle{empty}
\pagestyle{empty}

\begin{abstract}

We study a resource allocation problem over time, where a finite (random) resource needs to be
distributed among a set of users at each time instant.
Shortfalls in the resource allocated result in user dissatisfaction, 
which we model as an increasing  function of the long-term average shortfall for each user.
In many scenarios such as
wireless multimedia streaming, renewable energy grid, or supply chain logistics, a natural choice for
this cost function turns out to be concave, rather than  usual convex cost functions.
We consider minimizing the (normalized) cumulative cost across  users.
Depending on whether users' mean consumption rates are known or unknown,
this problem can be reduced to two different structured non-convex problems.
The ``known'' case is a concave minimization problem subject to a linear constraint.
By exploiting a well-chosen linearization of the cost functions,  we solve this 
provably within $\mathcal{O}\left(\frac{1}{m}\right)$ of the optimum, in $\mathcal{O}\left(m \log{m}\right)$ time,
where $m$ is the number of users in the system. 
In the ``unknown'' case, 
we are faced with minimizing the sum of functions that are concave on part of the domain
and convex on the rest, subject to a linear constraint.
We present a provably exact 
algorithm 
when the cost functions and prior distributions on mean consumption are the same
across all users. 
\end{abstract}
\begin{IEEEkeywords}
    Optimization, Smart grid, Stochastic systems
\end{IEEEkeywords}

\section{Introduction}
\label{sec:intro}
\IEEEPARstart{N}{on-convex} optimization has seen rapidly increasing  practical interest over the last decade, primarily due to applications in machine learning and related areas \cite{JainK2017}. Theoretical interest in non-convex optimization has also seen steady growth \cite{DauphinPGCGB2014, AnandkumarG2016}.
Theoretical results in non-convex optimization, however, are almost always limited to local optima \cite{DanilovaDGGGKS2022}.

Here, we consider two structured non-convex optimization problems and present polynomial-time algorithms with provable guarantees on reaching their respective {\em global} optima.  These non-convex optimization problems arise in dynamic resource allocation over time, where the goal is to minimize dissatisfaction due to shortfalls in allocated resources.  

In many engineering and business settings, the main challenge is to share  a limited,  time-varying, and random resource with multiple users over time.
Each of these users stores their allocated share in local storage and consumes it over time as needed.
Shortfalls---when there is not enough resource to meet the requirement at that time---cause the user dissatisfaction.
The goal of the resource server is to minimize the long-term dissatisfaction across all users. 
We highlight a few applications where this resource allocation problem is crucial.

{\bf Multimedia streaming. } In wireless multimedia streaming, users receive \NewAfterReview{their desired} content from the access point and store it in their buffers,
which the media players access to play.
Here, the time-varying resource is the amount of data  transmitted over the wireless system, which is random and time-varying due to multipath fading \cite{HouH2015,SinghK2015,BodasSYS2014}. See \cite{BhimarajuZC2022} for a detailed discussion.

{\bf Renewable energy grid. } The energy generated by wind and solar resources
varies over time due to weather and other factors \cite{NeelyTD2010}.
Users store the received energy in local storage \cite{MohamadTL2021},
and the consumption is also random and time-varying \cite{HillSCGG2012}.
Whenever there is not enough energy to meet requirements at that time, the user becomes discontent.
For  domestic users, this may lead to dissatisfaction or annoyance,%
\footnote{\NewAfterReview{With the proliferation of plug-in electric vehicles, domestic users
increasingly have access to a storage system for storing energy for later use.}}
whereas for industrial users this may \NewAfterReview{necessitate expensive backup options for heavy machines with steep financial costs}.

{\bf Supply chain logistics. }
Produced goods are sent to distribution centers before they can finally be sold to end consumers.
However, production can depend on the economy, climate, or upstream manufacturing, and
the amount of produced goods varies over time \cite{ChenSP2013}.
Further, the demand at any time instant at a specific distribution
center is random \cite{ClayG1997,GuptaM2003}.
If a certain distribution center runs out of inventory, it leads to dissatisfaction among the
users it serves.

As we see in Sec.~\ref{sec:model}, the appropriate function that maps the shortfall to dissatisfaction 
in all these applications turns out to be concave.

\subsection{Contributions} 
First, faithful to the listed applications, we formulate a general version of the dynamic resource allocation problem.
We then obtain lower bounds on the long-term average dissatisfaction in terms of two static non-convex optimization problems, depending on whether the mean consumption is known or unknown.
We also prove solving these non-convex problems yields simple optimal policies. 
Next, for known mean consumption, we solve the non-convex optimization problem almost accurately in (almost) linear time
by exploiting a suitable linearization and the underlying combinatorial structure.
Finally, for unknown mean consumption,  under a general assumption on the prior,
we show the corresponding static optimization problem has a unique non-convex structure,
and solve it accurately assuming symmetry across users. 

The rest of this paper is organized as follows.
Sec.~\ref{sec:model} formulates the resource allocation problem.
Sec.~\ref{sec:red2Opt} reduces the problem to a static
non-convex optimization problem and gives a lower bound for both cases:
where we do and do not know the mean consumption
rates of users.
Sec.~\ref{sec:lp} considers the case where we know the mean consumption rates
and develops an $\mathcal{O}(m\log m)$ algorithm for solving
the optimization problem that is within $\mathcal{O}\left(\frac{1}{m}\right)$ of the
optimal cost.
Sec.~\ref{sec:unknown} gives an exact algorithm for the case where
the average consumption rates are unknown, when the users are symmetric.
Sec.~\ref{sec:conclusion} finally concludes with a discussion on
open questions for future work.

\section{System Model}
\label{sec:model}

There are $m$ users in the system, and resource allocation happens 
at discrete time instants $t\in\{1,2,\ldots\}$.
At the beginning of each time slot $t$, the server has a total
available resource of $c(t)$ units.
Of these $c(t)$ units, user $i$ is served with
$S_i(t)$ units of resource, which is put into buffer $Q_i$.
At the end of each time slot $t$, user $i$ tries to consume $F_i(t)$ units
of resource from the buffer.
A \emph{shortfall} occurs at user $i$ if it tries to consume more resource
than is available in the buffer.
Let the amount of shortfall be $\kappa_i(t)$,
which is the amount of resource desired by user $i$ at time $t$ unavailable
in the buffer.
So the buffer evolution, service, and shortfall are connected by the following equations.%
\footnote{We use the notation $(x)^+=\max(x,0)$.}
\begin{align}
Q_i(t+1) &= \left(Q_i(t)+S_i(t)-F_i(t)\right)^+, \label{eq:buf-evolve}\\
\kappa_i(t) &= \big(F_i(t)-(Q_i(t)+S_i(t))\big)^+,\label{eq:shortfall-at-t} \\
\sum_{i=1}^m &S_i(t) \le c(t). \label{eq:capacity-at-t}
\end{align}
The average shortfall at user $i$ is given by $\bar{\kappa}_i$:
\begin{align}
\bar{\kappa}_i = \lim_{T\to\infty}\frac{1}{T}\sum_{t=1}^T \kappa_i(t).
\label{eq:avg-shortfall}
\end{align}
\AC{For each user $i$, its \emph{dissatisfaction} due to shortfall is given by $V_i(\cdot):[0,\infty)\mapsto[0,\infty)$, i.e., the dissatisfaction of user $i$ is $V_i(\bar{\kappa}_i)$.}
We first note that any reasonable dissatisfaction function $V_i(\cdot)$
must satisfy $V_i(0)=0$ and be an increasing function.
\AC{However, unlike many cost functions, 
a \emph{concave} and increasing $V_i(\cdot)$ is a natural choice, as explained below.}

\AC{Consider two different scenarios in the case of the energy grid: $\bar{\kappa}_i$ close $0$ and $\bar{\kappa}_i$ sufficiently large. In the first case, 
a small increase in $\bar{\kappa}_i$ can cause significant increase in
dissatisfaction.}
In electricity markets, home users with an almost shortfall-free service will
experience a greater incremental annoyance for the same amount of increase in shortfall
compared to users who are already experiencing significant shortfall.
Likewise, industrial users will have to budget for \NewAfterReview{backup options for} \AC{heavy machines when $\bar{\kappa}_i$ moves away from $0$.
\NewAfterReview{In contrast}, if $\bar{\kappa}_i$ is sufficiently large, a small decrease in $\bar{\kappa}_i$ would not cause any significant decrease in dissatisfaction.
Thus, in practice, $V_i$ should have a positive but decreasing derivative.
Similar arguments apply to multimedia streaming, as discussed in \cite{BhimarajuZC2022}.}

\subsection{Assumptions}
\label{sec:assumptions}

Our first assumption is that the availability process $c(t)$ has
a well-defined long-term average $\bar{c}$:
\begin{align*}
\bar{c} = \lim_{T\to\infty} \frac{1}{T}\sum_{t=1}^T c(t).
\end{align*}
This holds if the availability process is ergodic and stationary or
cyclo-stationary, which  captures daily, weekly, or seasonal changes in availability.
\NewAfterReview{Note that $c(t)$ can either be an arbitrary deterministic
sequence or a random process, whose long-term average converges to a constant (almost surely).}

We make the following assumptions regarding the consumption processes
$\{F_i(t)\}$.
We assume that for each $i$, $F_i(t)$ has a well-defined long-term
average $f_i$:
\begin{align*}
f_i = \lim_{T\to\infty} \frac{1}{T}\sum_{t=1}^T F_i(t),
\end{align*}
and that $\max_{i,t}F_i(t)$ is bounded  almost surely.

We restrict the set of allowable policies $\{S_i(t)\}$ to those with
a well-defined long-term average $s_i$ for each user $i$:
\begin{align*}
s_i = \lim_{T\to\infty} \frac{1}{T}\sum_{t=1}^T S_i(t).
\end{align*}

\AC{In general, knowledge of $\{Q_i(\tau):\tau\le t\}$ may result in better policies.
Hence, we allow that knowledge while obtaining the lower bound on long-term cost in Theorem~\ref{thm:lower-bound}.
However, in many applications, for example renewable energy and multimedia streaming \cite{BhimarajuZC2022},
$\{Q_i(\tau):\tau\le t\}$ are not available under current operative procedures.
So while designing our policies, we do not use $\{Q_i(\tau):\tau\le t\}$.}

\AC{\subsection{Objective}
Our goal is to develop allocation policy $\{S_i(t)\}$ that minimizes
the long-term average dissatisfaction normalized by the number of users: $\frac{1}{m}\sum_{i=1}^m V_i(\bar{\kappa}_i)$.
Here,  $\bar{\kappa}_i$, the average shortfall for user $i$, depends on the service $\{S_i(t)\}$ and the consumption $\{F_i(t)\}$.

Clearly, the optimum allocation policy depends on the mean consumptions $\{f_i\}$.
When $\{f_i\}$ are known, the objective is to find an allocation policy that minimizes the normalized long-term average dissatisfaction:
\begin{align}
\NewAfterReview{\arg\min_{\{S_i(t)\}}}\frac{1}{m}\sum_{i=1}^m V_i(\bar{\kappa}_i).
\label{eq:avg-dissatisfaction}
\end{align} 

On the other hand, when $\{f_i\}$ are unknown and have prior distributions $\{p_i\}$, a natural objective is an allocation policy that minimizes the {\em expectation} of \eqref{eq:avg-dissatisfaction}: 
\begin{align}
\NewAfterReview{\arg\min_{\{S_i(t)\}}}\frac{1}{m}\sum_{i=1}^m \EX_{f_i \sim p_i} V_i(\bar{\kappa}_i).
\label{eq:EXP-avg-dissatisfaction}
\end{align}}

\begin{remark}
\AC{These objective functions, in particular \eqref{eq:avg-dissatisfaction}, are similar to the objective in \cite{BhimarajuZC2022}. 
However, there are some key differences.
First, unlike the restrictive assumption $V_i(f_i)=V\cdot f_i$  in
\cite{BhimarajuZC2022}, we allow any concave increasing function.
Second, unlike \cite{BhimarajuZC2022},
our availability process $c(t)$ is not assumed to be constant, and our algorithms and their performance guarantees apply to time varying and random $c(t)$.
Third, $F_i(t)$ can take any bounded value here, and we do not assume
$F_i(t)\in\{0,1\}$ as in \cite{BhimarajuZC2022}. 
Fourth, the shortfall studied here, though similar, is not  the same as the ``frequency of pause'' studied in \cite{BhimarajuZC2022}. Finally, the setting that leads to the objective in \eqref{eq:EXP-avg-dissatisfaction}, has not been considered in \cite{BhimarajuZC2022}.}
\label{rem:diff-tcom}
\end{remark}

\section{Reduction to Non-convex Optimization}
\label{sec:red2Opt}

\AC{Our first step towards solving the long-term cost minimization problems in 
\eqref{eq:avg-dissatisfaction} and \eqref{eq:EXP-avg-dissatisfaction} is to reduce them to static non-convex optimization problems,
which if solved, naturally lead to optimal yet simple allocation policies.
We now present a theorem which serves that purpose, and in the following sections 
discuss methods for solving the static non-convex optimization problems efficiently.

\begin{theorem}
When $\{f_i\}$ are known, it is impossible to achieve
a lower long-term average dissatisfaction than
\begin{align}
\bar{V} = \min_{\{s_i\ge0\}}\quad& \frac{1}{m}\sum_{i=1}^mV_i(\max(f_i-s_i,0)) 
\label{eq:conc-prog}\\
\text{subject to}&\ \sum_{i=1}^m s_i \le \bar{c}.\nonumber
\end{align}
When $\{f_i\}$ are unknown, it is impossible to achieve
a lower expected long-term average dissatisfaction than
\begin{align}
\tilde{V} = \min_{\{s_i\ge0\}}\quad& \frac{1}{m}\sum_{i=1}^m\EX_{f_i \sim p_i} V_i(\max(f_i-s_i,0)) \label{eq:exp-conc-prog}\\
\text{subject to}&\ \sum_{i=1}^m s_i \le \bar{c}.\nonumber
\end{align}
Furthermore, if $\{\hat{s}_i\}$ solves \eqref{eq:conc-prog} or \eqref{eq:exp-conc-prog} within an optimality gap $\Delta$, then the simple policy 
$\{S_i(t)=\hat{s}_i \frac{c(t)}{\bar{c}}\}$ solves \eqref{eq:avg-dissatisfaction} or \eqref{eq:EXP-avg-dissatisfaction}, respectively, within an optimality gap $\Delta$.
\label{thm:lower-bound}
\end{theorem}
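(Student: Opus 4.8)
The plan is to establish the lower bound by a time-averaging (Jensen-type) argument and then verify that the proposed simple policy is feasible and nearly achieves that bound. First I would lower-bound the long-term average shortfall $\bar{\kappa}_i$ in terms of the long-term averages $f_i$ and $s_i$. The key observation is that from \eqref{eq:shortfall-at-t}, the per-slot shortfall satisfies $\kappa_i(t) \ge F_i(t) - Q_i(t) - S_i(t)$, and summing the buffer recursion \eqref{eq:buf-evolve} over a horizon telescopes the $Q_i$ terms. Concretely, I would sum $\kappa_i(t)$ over $t=1,\dots,T$, use that $Q_i(t+1) \ge Q_i(t)+S_i(t)-F_i(t)$ (dropping the $(\cdot)^+$), and rearrange to get $\frac{1}{T}\sum_t \kappa_i(t) \ge \frac{1}{T}\sum_t (F_i(t)-S_i(t)) - \frac{Q_i(T+1)-Q_i(1)}{T}$. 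Taking $T\to\infty$, the boundary term vanishes because $F_i$ (hence the achievable buffer growth) is bounded almost surely, yielding $\bar{\kappa}_i \ge f_i - s_i$. Combined with the trivial $\bar{\kappa}_i \ge 0$, this gives $\bar{\kappa}_i \ge \max(f_i-s_i,0)$ for \emph{every} admissible policy, even one using full buffer history $\{Q_i(\tau):\tau\le t\}$.

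Next I would convert this per-user shortfall bound into the claimed objective-value bound. Since each $V_i$ is increasing, $V_i(\bar{\kappa}_i) \ge V_i(\max(f_i-s_i,0))$, so the normalized cost of any policy is at least $\frac{1}{m}\sum_i V_i(\max(f_i-s_i,0))$. Because any admissible policy induces averages $\{s_i\}$ satisfying the capacity constraint $\sum_i s_i \le \bar{c}$ (obtained by averaging \eqref{eq:capacity-at-t} and passing to the limit), minimizing the right-hand side over all feasible $\{s_i\ge 0\}$ with $\sum_i s_i \le \bar{c}$ produces exactly $\bar{V}$ in \eqref{eq:conc-prog}. This proves no policy can beat $\bar{V}$. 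For the unknown-mean case, I would run the identical argument pointwise in $f_i$ and then take expectations $\EX_{f_i\sim p_i}$, using linearity of expectation across users to arrive at $\tilde{V}$ in \eqref{eq:exp-conc-prog}; the prior enters only through this outer expectation.

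For the achievability and optimality-gap claim, I would plug the candidate policy $S_i(t) = \hat{s}_i\, c(t)/\bar{c}$ into the dynamics and show its induced long-term average service is exactly $\hat{s}_i$, since $\frac{1}{T}\sum_t S_i(t) = \hat{s}_i \cdot \frac{1}{T\bar{c}}\sum_t c(t) \to \hat{s}_i$ by the definition of $\bar{c}$. The complementary step is an upper bound $\bar{\kappa}_i \le \max(f_i-s_i,0)$ for this specific proportional policy, so that the inequality from the lower bound is met with equality in the limit; I expect this to follow from a renewal/regeneration or direct limiting argument showing the buffer does not accumulate a persistent surplus beyond what the mean gap dictates. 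Feasibility $\sum_i S_i(t) = \frac{c(t)}{\bar{c}}\sum_i \hat{s}_i \le c(t)$ holds slot-by-slot because $\sum_i \hat{s}_i \le \bar{c}$. Finally, since this policy attains objective value $\frac{1}{m}\sum_i V_i(\max(f_i-\hat{s}_i,0))$, which is within $\Delta$ of $\bar{V}$ by hypothesis on $\{\hat{s}_i\}$, and $\bar{V}$ is itself the global lower bound, the policy is within $\Delta$ of the optimum of \eqref{eq:avg-dissatisfaction}; the unknown case is identical after taking expectations.

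I expect the main obstacle to be the matching upper bound on $\bar{\kappa}_i$ for the proportional policy: the lower bound only needs the one-sided buffer inequality, but achievability requires controlling how much the buffer actually absorbs, i.e., ruling out that the $(\cdot)^+$ truncations in \eqref{eq:buf-evolve} make the realized shortfall strictly exceed $\max(f_i-s_i,0)$ in the limit. Handling this carefully — likely via the almost-sure boundedness of $F_i(t)$ together with a stability/ergodicity argument on the buffer process driven by the scaled availability $c(t)/\bar{c}$ — is the delicate part, whereas the lower bound and feasibility are comparatively routine telescoping and averaging arguments.
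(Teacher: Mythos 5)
There is a genuine gap, and it sits exactly where you flag it: the achievability half. Your lower-bound argument (telescoping \eqref{eq:buf-evolve}, averaging \eqref{eq:capacity-at-t}, monotonicity of $V_i$) and the feasibility/averaging of the proportional policy match the paper's proof. But the theorem's last sentence requires the simple policy to actually \emph{attain} cost within $\Delta$, i.e., the matching upper bound $\bar{\kappa}_i \le \max(f_i-\hat{s}_i,0)$, and this you never prove --- you only write that you ``expect'' it to follow from a renewal/regeneration or ergodicity argument. In the paper this is the substance of Claim~\ref{claim:shortfall-equation}, and it needs two nontrivial ingredients that your sketch does not supply. First, when $s_i<f_i$, the telescoped identity gives equality only after showing $\lim_{T\to\infty} Q_i(T)/T=0$; the paper proves this separately (Theorem~\ref{thm:stable-queue} in Appendix~\ref{sec:extended}) via the observation that the queue must empty infinitely often when under-served. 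Second, when $s_i\ge f_i$ --- a case that genuinely occurs in the unknown-consumption setting, since the realized $f_i$ can fall below the fixed $\hat{s}_i$ --- the paper uses a scaling trick with no analogue in your proposal: compare against the dominated policy $\tilde{S}_i(t)=(\tilde{s}/s_i)S_i(t)$ with $\tilde{s}<f_i$, use monotonicity of the shortfall in the service sequence to get $\bar{\kappa}_i\le f_i-\tilde{s}$, and let $\tilde{s}\uparrow f_i$ to conclude $\bar{\kappa}_i=0$. Your proposed fallback cannot substitute for this: the paper assumes only that Ces\`aro averages of $c(t)$, $F_i(t)$, $S_i(t)$ exist (they may be arbitrary deterministic sequences), so there is no i.i.d./Markov structure on which a renewal or regeneration argument could run.

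A secondary but real flaw: in the lower bound you claim the boundary term $\bigl(Q_i(T+1)-Q_i(1)\bigr)/T$ vanishes ``because $F_i$ (hence the achievable buffer growth) is bounded almost surely.'' That inference is false --- per-slot boundedness does not prevent linear buffer growth; any over-serving policy with $s_i>f_i$ accumulates surplus so that $Q_i(T)/T$ tends to a strictly positive limit. The fix is easy and avoids the issue entirely: since $Q_i(T+1)\ge 0$, drop it from the telescoped sum to get $\frac{1}{T}\sum_{t=1}^{T}\kappa_i(t) \ge \frac{1}{T}\sum_{t=1}^{T}\bigl(F_i(t)-S_i(t)\bigr) - \frac{Q_i(1)}{T}$, whose right side converges to $f_i-s_i$ with no stability lemma needed; combined with $\bar{\kappa}_i\ge 0$ this yields $\bar{\kappa}_i\ge\max(f_i-s_i,0)$ for every admissible policy. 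So the lower-bound half is patchable in one line, but the achievability half is a missing proof, not a routine verification.
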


The main step in Theorem~\ref{thm:lower-bound}'s proof is the following claim relating average shortfall, allocation, and consumption. }

\begin{claim}
Under the assumptions of Sec.~\ref{sec:assumptions}, the average shortfall
at user $i$, $\bar{\kappa}_i$ satisfies
\begin{align*}
\bar{\kappa}_i = \max(f_i-s_i,0),
\end{align*}
where $f_i$ and $s_i$ are the long-term consumption rate
and service rate respectively.
\label{claim:shortfall-equation}
\end{claim}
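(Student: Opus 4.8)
The plan is to collapse the per-slot buffer dynamics into a single exact conservation identity and then determine $\bar{\kappa}_i$ by controlling how fast the buffer can grow. First I would combine \eqref{eq:buf-evolve} and \eqref{eq:shortfall-at-t} into the balance
\begin{align*}
Q_i(t+1) = Q_i(t) + S_i(t) - F_i(t) + \kappa_i(t),
\end{align*}
which holds in both regimes: when there is no shortfall the update is linear and $\kappa_i(t)=0$, and when a shortfall occurs the buffer is emptied, $Q_i(t+1)=0$, and $\kappa_i(t)$ exactly offsets the negative net flow. Summing over $t=1,\dots,T$, dividing by $T$, and using $Q_i(1)/T\to0$ together with the long-term averages $f_i$ and $s_i$ gives
\begin{align*}
\bar{\kappa}_i = \lim_{T\to\infty}\frac{Q_i(T+1)}{T} + (f_i - s_i),
\end{align*}
so the entire claim reduces to showing that the normalized buffer converges to $\max(s_i-f_i,0)$.

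The lower bound $\bar{\kappa}_i\ge\max(f_i-s_i,0)$ is then immediate: each $\kappa_i(t)\ge0$ forces $\bar{\kappa}_i\ge0$, and $Q_i(T+1)\ge0$ forces the limit term to be nonnegative, hence $\bar{\kappa}_i\ge f_i-s_i$. The substance is the matching upper bound $\lim_T Q_i(T+1)/T=\max(s_i-f_i,0)$, which I would obtain by unrolling the Lindley-type recursion \eqref{eq:buf-evolve}. Writing $X_i(t)=S_i(t)-F_i(t)$ and $P(t)=\sum_{\tau=1}^t X_i(\tau)$, an induction on $T$ yields the closed form
\begin{align*}
Q_i(T+1) &= \max\Big(Q_i(1)+P(T),\ \max_{2\le j\le T+1}\big(P(T)-P(j-1)\big)\Big)\\
&= \max\Big(Q_i(1)+P(T),\ P(T)-\min_{1\le j\le T}P(j)\Big).
\end{align*}
Dividing by $T$ and using $P(T)/T\to s_i-f_i$, it remains to evaluate $\lim_T \frac{1}{T}\big(P(T)-\min_{1\le j\le T}P(j)\big)$. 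The elementary fact I would isolate and prove is that whenever $P(t)/t\to c$ one has $\frac{1}{T}\big(P(T)-\min_{1\le j\le T}P(j)\big)\to\max(c,0)$; substituting $c=s_i-f_i$ then gives $\lim_T Q_i(T+1)/T=\max(s_i-f_i,0)$, and therefore $\bar{\kappa}_i=\max(s_i-f_i,0)+(f_i-s_i)=\max(f_i-s_i,0)$.

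The step I expect to be the crux is the deficit regime $f_i>s_i$, where I must show the buffer grows only sublinearly (equivalently, that the running minimum of $P$ stays within $o(T)$ of $P(T)$). The clean mechanism is the regeneration noted above: a shortfall empties the buffer, so $Q_i(T+1)$ equals the surplus $P(T)-P(\sigma(T))$ accumulated since the last shortfall time $\sigma(T)$, and when consumption outstrips service shortfalls recur often enough that this surplus cannot accumulate at a linear rate. Converting this intuition into a uniform $o(T)$ estimate, using only the convergence of the Cesàro averages of $\{F_i(t)\}$ and $\{S_i(t)\}$ (and the almost-sure boundedness of $F_i$ to keep every partial sum finite), is the one place requiring care; the positive-drift and zero-drift cases, and all the remaining algebra, are routine.
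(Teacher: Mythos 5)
Your proof is correct, but it takes a genuinely different route from the paper's. Both arguments start from the same conservation identity $\kappa_i(t) = Q_i(t+1) - \left(Q_i(t)+S_i(t)-F_i(t)\right)$ (the paper derives it via $(-x)^+=(x)^+-x$, you by cases), which reduces the claim to controlling $Q_i(T)/T$. From there the paper splits into cases: for $s_i<f_i$ it proves $Q_i(T)/T\to 0$ by a regeneration argument (the queue empties infinitely often; decompose at the last emptying time before $T$ --- this is Theorem~\ref{thm:stable-queue} in Appendix~\ref{sec:extended}), and for $s_i\ge f_i$ it uses a separate scaling/coupling argument, comparing the given policy against $\tilde{S}_i(t)=(\tilde{s}/s_i)S_i(t)$ for $\tilde{s}<f_i$ and invoking monotonicity of the shortfall in the service process. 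You instead treat all regimes at once: unroll the Lindley recursion into the closed form $Q_i(T+1)=\max\bigl(Q_i(1)+P(T),\ P(T)-\min_{1\le j\le T}P(j)\bigr)$ and apply one deterministic lemma, that $P(t)/t\to c$ implies $\tfrac{1}{T}\bigl(P(T)-\min_{1\le j\le T}P(j)\bigr)\to\max(c,0)$. That lemma is true and its proof is elementary, so the ``care'' you anticipate in the deficit regime is modest: fix $\epsilon>0$ and the threshold $N$ beyond which $|P(t)/t-c|<\epsilon$; for $c\le 0$, indices $j\ge N$ give $P(T)-P(j)\le (T-j)c+2T\epsilon\le 2T\epsilon$ while indices $j<N$ contribute only an additive constant, and for $c>0$ the running minimum stabilizes at a finite value, so no regeneration structure is needed at all. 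Your route buys three things: a unified argument with no case split; the exact limit $Q_i(T)/T\to\max(s_i-f_i,0)$ in every regime (the paper only remarks informally that the buffer ``grows indefinitely'' when $s_i>f_i$); and avoidance of the paper's coupling step, which is stated somewhat loosely (``$S_i(t)$ is strictly more than $\tilde{S}_i(t)$'' implying smaller shortfall) and itself requires an induction to make rigorous. What the paper's route buys in exchange is a standalone queue-stability theorem of independent interest, reused in its discussion of finite buffers.
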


\begin{proof}
First we consider the case $s_i<f_i$.
Note the following identity.
\begin{align*}
(-x)^+ = (x)^+ - x  \quad \forall \ x \in \mathbb{R}.
\end{align*}
Using $Q_i(t)+S_i(t)-F_i(t)$ for $x$ gives us
\begin{align*}
\kappa_i(t) = Q_i(t+1) - (Q_i(t)+S_i(t)-F_i(t)).
\end{align*}
Adding this equation for $t\in\{1,2,\ldots,T\}$ gives us
\begin{align*}
\sum_{t=1}^T \kappa_i(t) =
Q_i(T+1) - Q_i(1) + \sum_{t=1}^T F_i(t) - \sum_{t=1}^T S_i(t).
\end{align*}
When $s_i<f_i$, \NewAfterReview{the existence of long-term averages
for $\{S_i(t)\}$ and $\{F_i(t)\}$ imply
$\lim_{T\to\infty}\tfrac{Q_i(T)}{T}=0$.}%
\FinalVersion{\footnote{Please see Appendix~\ref{sec:extended} for a formal proof
and comments on how finite buffer capacity might affect the system.}}
So dividing by $T$ on both sides and letting $T\to\infty$ gives
\begin{align}
\bar{\kappa}_i = f_i - s_i,
\label{eq:dissatisfaction-equation-fi-less}
\end{align}
which concludes the proof for the $s_i<f_i$ case.

We now prove that $\bar{\kappa}_i=0$ if $s_i=f_i$.
For any policy $\{S_i(t)\}$ with an average service rate $s_i(=f_i)$,
define $\tilde{S}_i(t)=\frac{\tilde{s}}{s_i}S_i(t)$ for some $\tilde{s}<f_i$.
So \NewAfterReview{the long-term average}
of $\{\tilde{S}_i(t)\}$ is $\tilde{s}$.
Further, $\{\tilde{S}_i(t)\}$ is a valid policy since it satisfies all the policy constraints.
Using \eqref{eq:dissatisfaction-equation-fi-less}, we get
\begin{align*}
    \bar{\kappa}_i\left(\{\tilde{S}_i(t)\}\right) = f_i-\tilde{s}.
\end{align*}
Using the fact that $S_i(t)$ is strictly more than $\tilde{S}_i(t)$, and
the fact that $\tilde{s}$ can be any arbitrary value less than $f_i$, we get
\begin{align*}
    \bar{\kappa}_i(s_i=f_i) \le f_i-\tilde{s}\quad \forall \ \tilde{s}<f_i,
\end{align*}
which is equivalent to
\begin{align*}
    \bar{\kappa}_i \le 0 = f_i-s_i.
\end{align*}
Thus it holds that $\bar{\kappa}_i=f_i-s_i$ for all $s_i\le f_i$.

The argument for the $s_i=f_i$ case also applies
to any $s_i\ge f_i$.
Thus we have the more general expression
\begin{align*}
    \bar{\kappa}_i = \left(f_i-s_i\right)^+,
\end{align*}
which works for all $s_i\ge0$.
However, when $s_i>f_i$, the buffer \NewAfterReview{occupancy} $Q_i(\cdot)$ grows indefinitely large.
\end{proof}

\begin{proof}[Proof of Theorem~\ref{thm:lower-bound}]
Since any scheduling policy must satisfy $\sum_{i=1}^mS_i(t)\le c(t)$
for all times $t$, the long-term averages must satisfy $\sum_{i=1}^ms_i\le\bar{c}$.
So the constraint set includes all feasible policies.
For any feasible policy, Claim~\ref{claim:shortfall-equation} gives
the average shortfall at user $i$ as $\bar{\kappa}_i=\max(f_i-s_i,0)$.
Hence, when the mean consumptions  are known, the average dissatisfaction of any policy must be at least as large as
the minimum of $\frac{1}{m}\sum_{i=1}^mV_i(\max(f_i-s_i,0))$ subject to these constraints.
A similar argument applies when the mean consumptions are unknown.

\AC{Clearly, the simple policy satisfies the budget constraint at any time $t$: $\sum_{i=1}^m S_i(t)\le c(t)$. Also, the long-term average of this simple allocation is $\hat{s}_i$. Hence, by the assumption on $\hat{s}_i$ and Claim~\ref{claim:shortfall-equation}, the long-term average dissatisfaction is within $\Delta$ of the optimum.}
\end{proof}

\begin{remark}
\AC{While Theorem~\ref{thm:lower-bound} seems to give a simple policy,} it is not clear if there are efficient algorithms to solve \eqref{eq:conc-prog} and \eqref{eq:exp-conc-prog}.
This is because \eqref{eq:conc-prog} and \eqref{eq:exp-conc-prog} are non-convex problems, and 
standard gradient-based  methods \cite{BoydV2004,Neely2010} may get stuck at a local optimum.
The optimization problem in \eqref{eq:conc-prog} has been solved in \cite{BhimarajuZC2022} using a knapsack-type subroutine 
for the special case where $V_i(f_i)=V\cdot f_i$ for all $i$. \AC{However, the general case remains open, which we tackle here. Further, the problem in \eqref{eq:exp-conc-prog} is completely new and has an interesting and challenging non-convex structure, as discussed later.}
\label{rem:about-lower-bound}
\end{remark}

Note that the lower bound in Theorem~\ref{thm:lower-bound} in the known consumption case  holds even if the scheduling policy
knows $\{F_i(t)\}$ (and not just $\{f_i\}$).
This is because Claim~\ref{claim:shortfall-equation} makes no
assumptions on the inter-dependence between $F_i(t)$ and $S_i(t)$. 
\AC{However, the simple optimal policy 
in Theorem~\ref{thm:lower-bound} for this case only assumes knowledge of $\{f_i\}$.}

\section{LP Algorithm for Known $\{f_i\}$}
\label{sec:lp}

\AC{In this section, we present a polynomial-time algorithm for the non-convex optimization problem \eqref{eq:conc-prog} of Theorem~\ref{thm:lower-bound}. 
Towards that, we solve a} linearized 
program \eqref{prog:lp} to get the solutions $\{s_i^\textsc{lp}\}$.
Then, using Theorem~\ref{thm:lower-bound}'s simple policy, at each time $t$, we allocate $s_i^\textsc{lp}/\bar{c}$ fraction of
the available $c(t)$ units of resource to user $i$.
We state this as \AlgName{} (Alg.~\ref{alg:lp-allocation}) and its performance guarantee as Theorem~\ref{thm:perf-guarantee}.

\begin{align}
\underset{\{s_i\mathop{:}0\le s_i\le f_i\}}{\text{minimize}}&\ \  
\frac{1}{m}\sum_{i=1}^m \left(1-\frac{s_i}{f_i}\right)V_i(f_i) \tag{LinProg}\label{prog:lp}\\
\text{subject to}\ &\ \ \ \sum_{i=1}^m s_i \le \bar{c}. \nonumber
\end{align}

\begin{algorithm}
\algnewcommand\algorithmicforeach{\textbf{for each}}
\algdef{S}[FOR]{ForEach}[1]{\algorithmicforeach\ #1\ \algorithmicdo}
\caption{\AlgName}
\label{alg:lp-allocation}
\begin{algorithmic}[1]
\State Solve  \eqref{prog:lp} to get solutions $\{s_i^\textsc{lp}\}$
\For {time $t\in\{1,2,\ldots\}$}
    \ForEach {$i\in\{1,2,\ldots,m\}$}
        \State Allocate $S_i(t)=s_i^\textsc{lp}\frac{c(t)}{\bar{c}}$ units to user $i$
    \EndFor
\EndFor
\end{algorithmic}
\end{algorithm}

Note that \eqref{prog:lp} is equivalent to maximization of positively weighted sum of non-negative
variables subject to an upper bound on their sum (after ignoring constants and rescaling).
So we do not need general-purpose linear-programming solvers,
and it can be solved using a familiar greedy approach in $\mathcal{O}(m \log m)$ time.
We just need to assign $s_i=f_i$ in decreasing order of $\left\{\frac{V_i(f_i)}{f_i}\right\}$, 
and the final $i$ just before we run out of resource $\bar{c}$ gets the remaining resource.%
\footnote{This solution also satisfies the condition of Claim~\ref{claim:only-corners}.}

Note that we need to solve the linear program only once until $\{f_i\}$ and $\bar{c}$ change.
We then simply reuse the solution for each time instant to allocate
the resource.
In multimedia streaming, the set of users can change every few minutes.
Consumption and supply can also change with time in electricity markets and supply chain settings.
\NewAfterReview{Using an off-the-shelf solver to repeatedly
solve \eqref{eq:conc-prog} with new $\{f_i\}$ and $\bar{c}$ is intractable, and our linear program lets us do this efficiently.}

\begin{theorem}
\AlgName{} (Alg.~\ref{alg:lp-allocation}) always results in a feasible
allocation, and
the long-term average dissatisfaction achieved by \AlgName, $V^\AlgName$,
satisfies
\begin{align*}
V^\AlgName - \bar{V} \le \frac{C}{m}
\end{align*}
for some constant $C$,
where $\bar{V}$ is the lower bound from Theorem~\ref{thm:lower-bound}.
\label{thm:perf-guarantee}
\end{theorem}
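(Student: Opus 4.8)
The plan is to establish two things: (1) feasibility of the allocation, and (2) the $\mathcal{O}(1/m)$ bound on the optimality gap. Feasibility is immediate from Theorem~\ref{thm:lower-bound}: since $\sum_i s_i^\textsc{lp} \le \bar{c}$ by the linear program's constraint, the simple policy $S_i(t) = s_i^\textsc{lp} \frac{c(t)}{\bar{c}}$ satisfies $\sum_i S_i(t) \le c(t)$ at every $t$, and $V^\AlgName = \frac{1}{m}\sum_i V_i(\max(f_i - s_i^\textsc{lp}, 0))$ by Claim~\ref{claim:shortfall-equation}. So the real work is bounding $V^\AlgName - \bar{V}$.

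**The key idea** is that \eqref{prog:lp} is a linearization of the concave objective \eqref{eq:conc-prog}: for $0 \le s_i \le f_i$, the chord $\bigl(1 - \frac{s_i}{f_i}\bigr)V_i(f_i)$ agrees with $V_i(f_i - s_i)$ at the two endpoints $s_i = 0$ and $s_i = f_i$, and by concavity of $V_i$ the chord lies \emph{below} the true cost on the whole interval. I would first argue that the optimal solution $\{s_i^\ast\}$ of \eqref{eq:conc-prog} can be taken to lie in $[0, f_i]$ for each $i$ (allocating more than $f_i$ is wasteful since it drives $\bar{\kappa}_i$ to zero without further benefit, and the referenced Claim~\ref{claim:only-corners} presumably shows an optimum exists at the ``corners''). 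On this box, the linear objective lower-bounds the concave one pointwise, so $\bar{V}_\textsc{lp} := \text{optimal value of } \eqref{prog:lp} \le \bar{V}$. I would then compare $V^\AlgName$ to $\bar{V}_\textsc{lp}$: since $\{s_i^\textsc{lp}\}$ minimizes the linear objective, and the greedy structure forces $s_i^\textsc{lp} \in \{0, f_i\}$ for all but \emph{at most one} fractional index $i^\ast$, the concave objective $V^\AlgName$ and the linear objective agree exactly on every coordinate except $i^\ast$. Hence $V^\AlgName - \bar{V}_\textsc{lp} = \frac{1}{m}\bigl[V_{i^\ast}(f_{i^\ast} - s_{i^\ast}^\textsc{lp}) - \bigl(1 - \tfrac{s_{i^\ast}^\textsc{lp}}{f_{i^\ast}}\bigr)V_{i^\ast}(f_{i^\ast})\bigr]$, a single-term discrepancy.

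**The main obstacle** is controlling this single fractional term uniformly in $m$. The bracketed quantity is the gap between a concave function and its chord at one interior point, which is nonnegative but must be shown to be bounded by an absolute constant $C$ independent of $m$. This requires an assumption (or a consequence of the model) that the per-user quantities $V_i(f_i)$ are uniformly bounded --- plausibly because $f_i \le \max_{i,t} F_i(t)$ is bounded almost surely by assumption, and $V_i$ is increasing, so $V_i(f_i) \le V_i(f_{\max})$. Granting such a bound $V_i(f_i) \le C$, the chord gap at $i^\ast$ is at most $V_{i^\ast}(f_{i^\ast}) \le C$ (since the chord is nonnegative and the concave value is at most $V_{i^\ast}(f_{i^\ast})$), giving $V^\AlgName - \bar{V} \le V^\AlgName - \bar{V}_\textsc{lp} \le \frac{C}{m}$.

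**In summary,** I would proceed in the order: (i) invoke Claim~\ref{claim:shortfall-equation} and Theorem~\ref{thm:lower-bound} to reduce to the static comparison and confirm feasibility; (ii) use concavity to show the linearized value lower-bounds $\bar{V}$; (iii) exploit the greedy/corner structure (Claim~\ref{claim:only-corners}) to localize the entire discrepancy to one fractional coordinate; and (iv) bound that coordinate's chord gap by an absolute constant using the almost-sure boundedness of consumption. Step (iii)--(iv) is where the argument has genuine content; the rest is bookkeeping. The elegant point is that the linearization is \emph{exact at the corners}, so the non-convexity costs us nothing except at the single index where the budget constraint is active fractionally, and that single term is $\mathcal{O}(1/m)$ after normalization.
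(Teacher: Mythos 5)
Your proof is correct, and it takes a genuinely different route at the one step where the paper's proof does real work. Both arguments share the same skeleton: reduce to the static problem via Claim~\ref{claim:shortfall-equation} and the simple policy of Theorem~\ref{thm:lower-bound}, apply Claim~\ref{claim:only-corners} to \eqref{prog:lp} so that the discrepancy between $V^\AlgName$ and the LP optimal value is confined to a single fractional coordinate, and bound that coordinate by a constant using the almost-sure boundedness of $\{f_i\}$. Where you diverge is in relating the LP optimum to $\bar{V}$. The paper plugs the concave program's optimizer $\{s_i^\textsc{cp}\}$ into \eqref{prog:lp} as a feasible point (its inequality \eqref{eq:lp-le-cp}) and then must invoke Claim~\ref{claim:only-corners} a \emph{second} time, now for \eqref{eq:conc-prog}, to argue that this substitution overshoots $\bar{V}$ by at most $\tfrac{1}{m}V_{i^\textsc{cp}}(f_{i^\textsc{cp}})$ (its inequality \eqref{eq:cp-le-vbar}). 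You instead use the underestimator property of the linearization: since $s\mapsto V_i(f_i-s)$ is concave and agrees with the chord $\left(1-\tfrac{s}{f_i}\right)V_i(f_i)$ at $s=0$ and $s=f_i$, the LP objective lies pointwise below the concave objective on the box $\prod_i[0,f_i]$, and after the (harmless) truncation of the concave program's optimum into that box, this gives $\bar{V}_\textsc{lp}\le\bar{V}$ with no error term at all. This buys you two things: you never need the corner structure of the concave program's optimum (only the LP's), and your final constant bounds the single term $V_{i^*}(f_{i^*})$ rather than the paper's sum $V_{i^\textsc{lp}}(f_{i^\textsc{lp}})+V_{i^\textsc{cp}}(f_{i^\textsc{cp}})$, so your bound is tighter by roughly a factor of two. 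The caveat you flagged---that $C$ must be independent of $m$, which requires the per-user values $V_i(f_i)$ to be uniformly bounded across users---is exactly the same implicit assumption the paper makes in its closing sentence, so your argument is no less rigorous on that point.
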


Before we prove Theorem~\ref{thm:perf-guarantee}, we state Claim~\ref{claim:only-corners},
which characterizes the solution of \eqref{eq:conc-prog}, the non-convex
optimization program in Theorem~\ref{thm:lower-bound}.
This will be helpful in relating the solution of
\eqref{prog:lp} to the solution of \eqref{eq:conc-prog}.

\begin{claim}
\AC{There exists at least one global optimum $\{s_i^\textsc{cp}\}$ of \eqref{eq:conc-prog} which satisfies the following:}
\begin{align*}
|\{i: 0 < s_i^\textsc{cp} < f_i\}| \le 1,
\end{align*}
i.e., there is at most one user $i$ where the optimal solution
$s_i^\textsc{cp}$ is neither $0$ nor $f_i$. 
\label{claim:only-corners}
\end{claim}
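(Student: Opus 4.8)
The plan is to exploit two structural facts about \eqref{eq:conc-prog}: each per-user cost is concave once restricted to the relevant interval, and the feasible set is a bounded polytope, so that optima can be pushed to extreme points by a budget-preserving exchange argument. Throughout, write $h_i(s_i) = V_i(\max(f_i - s_i, 0))$ for user $i$'s contribution, so the objective is $\frac{1}{m}\sum_i h_i(s_i)$.

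First I would reduce to a bounded box. For any $s_i > f_i$ we have $h_i(s_i) = V_i(0) = 0 = h_i(f_i)$, so replacing such an $s_i$ by $f_i$ leaves the objective unchanged while keeping the budget constraint satisfied (the left-hand side of $\sum_i s_i \le \bar{c}$ only decreases). Hence there is a global optimum lying in the compact polytope $P = \{(s_1,\dots,s_m) : 0 \le s_i \le f_i\ \forall i,\ \sum_i s_i \le \bar{c}\}$, and it suffices to produce an optimum in $P$ with at most one fractional coordinate (a coordinate strictly between $0$ and $f_i$).

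Next I would record the key structural fact: on $[0, f_i]$ we have $h_i(s_i) = V_i(f_i - s_i)$, the composition of the concave increasing $V_i$ with the affine map $s_i \mapsto f_i - s_i$, hence concave. (It is precisely the kink at $s_i = f_i$ that destroys concavity on all of $[0,\infty)$, which is why the reduction to $P$ matters.) Thus the objective is concave on $P$, and conceptually we expect its minimum at an extreme point of $P$; I would make this concrete through an exchange step rather than invoking it abstractly.

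The heart of the argument is that exchange step. Starting from an optimum $s^\star \in P$, suppose two coordinates $j,k$ are fractional, i.e. $s^\star_j \in (0, f_j)$ and $s^\star_k \in (0, f_k)$. Moving along the budget-preserving direction $d = e_j - e_k$ (with $e_j, e_k$ the standard basis vectors), set $s(\epsilon) = s^\star + \epsilon d$; this stays in $P$ for $\epsilon$ in a nondegenerate interval $[-\delta_1, \delta_2]$ with $\delta_1,\delta_2 > 0$, since both coordinates start strictly inside their bounds and $\sum_i s_i$ is unchanged. Along this segment the objective equals $\frac{1}{m}\big(h_j(s^\star_j + \epsilon) + h_k(s^\star_k - \epsilon)\big)$ plus a constant, which is concave in $\epsilon$ by the previous step, so it attains its minimum at an endpoint $\epsilon \in \{-\delta_1, \delta_2\}$; I may move there without increasing the cost, remaining optimal. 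At either endpoint at least one of coordinates $j,k$ reaches $0$ or its cap, strictly reducing the number of fractional coordinates. Iterating at most $m$ times yields a global optimum $\{s_i^\textsc{cp}\} \in P$ with at most one fractional coordinate, which is exactly the claim. The main obstacle I anticipate is not any single step but assembling them cleanly: ensuring the exchange never increases the objective (which is where concavity on $P$ is used) and verifying that each iteration provably eliminates a fractional coordinate without creating a new one, so that the procedure terminates.
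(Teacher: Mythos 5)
Your proof is correct, but it reaches the conclusion by a genuinely different route than the paper. The paper's proof rewrites \eqref{eq:conc-prog} over the box $0\le s_i\le f_i$, notes the objective is concave and the feasible set a polyhedron, invokes the general fact that a concave function attains its minimum over a polyhedron at a corner point, and then counts tight constraints at a corner: a vertex in $m$ dimensions needs $m$ linearly independent active constraints, the budget constraint supplies at most one, and $s_i=0$, $s_i=f_i$ cannot both hold, so at least $m-1$ coordinates sit at $0$ or $f_i$. You instead give a constructive purification argument: after explicitly justifying the clipping of any $s_i>f_i$ down to $f_i$ (a step the paper performs silently when it rewrites the problem with box constraints), you take two fractional coordinates $j,k$, move along the budget-preserving direction $e_j-e_k$, use concavity of the objective restricted to that segment to conclude an endpoint is no worse, and observe that at an endpoint one of the two coordinates hits a bound; iterating strictly reduces the number of fractional coordinates without creating new ones, so the process terminates with at most one. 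Both arguments rest on the same structural facts (concavity of $V_i(f_i-s_i)$ on the box, linearity of the constraints), but yours is self-contained and algorithmic---it needs neither the extreme-point principle (which the paper only supports via a citation in a footnote) nor any vertex/dimension counting, and it handles the tight- and slack-budget cases uniformly since $e_j-e_k$ preserves the sum regardless. What the paper's version buys in exchange is brevity, given the cited fact, and an identification of the optimum as a vertex of the feasible polytope, which connects naturally to the greedy corner-point solution it later uses for \eqref{prog:lp}.
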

\begin{proof}
Notice that \eqref{eq:conc-prog} can be written as 
the minimization of $\sum_{i=1}^m V_i(f_i-s_i)$ subject to $\sum_{i=1}^m s_i \le \bar{c}$ and $0\le s_i \le f_i$.
Since $V_i(\cdot)$ are all concave, the objective 
is also concave.
The constraint set is defined by linear constraints, and is thus a polyhedron.
Since the minima of a concave function over a polyhedron happens at a corner point,%
\footnote{See e.g., \url{https://math.stackexchange.com/q/2946023}.
}
the solution $\{s_i^\textsc{cp}\}$ is a corner point of the feasible set.

The vector $\{s_i^\textsc{cp}\}$ is $m$-dimensional, so corner
points are formed by the solution of $m$ linear equations.
Since $\sum_{i=1}^ms_i=\bar{c}$ is just one equation, at least $m-1$ of
the equations $\{s_i=0\}\cup\{s_i=f_i\}$ must be true.
(If $\sum_{i=1}^ms_i$ is strictly less than $\bar{c}$, then $m$ of
these equations must be true.)
Further, both $s_i=0$ and $s_i=f_i$ cannot be true simultaneously.
Thus at least $m-1$ of $\{s_i\}$ must be either $0$ or $f_i$, which implies
at most one $s_i$ is neither $0$ nor $f_i$.
\end{proof}

We are now ready to prove Theorem~\ref{thm:perf-guarantee}.

\begin{proof}[Proof of Theorem~\ref{thm:perf-guarantee}]
First observe that \eqref{prog:lp} is in the form of \eqref{eq:conc-prog} as well:
if $\tilde{V}_i(x)=x\frac{V_i(f_i)}{f_i}$, then
$\tilde{V}_i(f_i-s_i)=\left(1-\frac{s_i}{f_i}\right)V_i(f_i)$, which is 
the term in \eqref{prog:lp}.
Further, since $\tilde{V}_i(x)$ is linear, and thus concave, it satisfies all
the properties of $V_i(\cdot)$ assumed in Sec.~\ref{sec:assumptions} (note
that $\tilde{V}_i(0)=0$ and $\tilde{V}_i(\cdot)$ is increasing).
So Claim~\ref{claim:only-corners} applies to \eqref{prog:lp} as well,
and an optimal $\{s_i^\textsc{lp}\}$ satisfies
\begin{align*}
|\{i:0<s_i^\textsc{lp}<f_i\}| \le 1.
\end{align*}
While it is possible that a generic linear-programming solver
might give a different (global optimal) solution which does not
satisfy this condition, please see the discussion after Alg.~\ref{alg:lp-allocation} 
for an efficient and simple algorithm
which gives a solution to \eqref{prog:lp} satisfying Claim~\ref{claim:only-corners}.
Let $\mathcal{S}^\textsc{lp}$ denote  $\{i:s_i^\textsc{lp}<f_i\}$, the set of 
users who have a non-zero dissatisfaction, and let $\mathcal{S}^\textsc{cp}$
denote a similar set for $\{s_i^\textsc{cp}\}$.
Further, let $i^\textsc{lp}$  and $i^\textsc{cp}$ denote the users (if any)
who have $s_i>0$, but still have a non-zero dissatisfaction
(i.e., the one user identified in Claim~\ref{claim:only-corners}).

Since $\{s_i^\textsc{cp}\}$, the optimal solution of \eqref{eq:conc-prog},
also satisfies the constraints of \eqref{prog:lp}, $\{s_i^\textsc{cp}\}$
is a feasible solution for \eqref{prog:lp}.
But the optimal solution of \eqref{prog:lp} is given by $\{s_i^\textsc{lp}\}$.
So we have
\begin{align}
\tfrac{1}{m}\sum_{i\in\mathcal{S}^\textsc{lp}}\left(1-\tfrac{s_i^\textsc{lp}}{f_i}\right)V_i(f_i)
&\le \tfrac{1}{m}\sum_{i\in\mathcal{S}^\textsc{cp}}\left(1-\tfrac{s_i^{\textsc{cp}}}{f_i}\right)V_i(f_i).
\label{eq:lp-le-cp}
\end{align}
Claim~\ref{claim:only-corners} gives us
\begin{align*}
\tfrac{1}{m}\!\!\!\sum_{i\in\mathcal{S}^\textsc{lp}}\!\!\!\left(\!1\!\!-\!\!\tfrac{s_i^\textsc{lp}}{f_i}\!\right)\!\!V_i(f_i)
&= \tfrac{1}{m}\!\!\!\left(\sum_{i\in\mathcal{S}^\textsc{lp}\setminus\{i^\textsc{lp}\}}\!\!\!\!\!\!\!\!V_i(f_i)
\!+\! \left(\!1\!\!-\!\!\tfrac{s_{i^\textsc{lp}}^\textsc{lp}}{f_{i^\textsc{lp}}}\!\right)\!\!V_{i^\textsc{lp}}(f_{i^\textsc{lp}})\!\right)\!\!.
\end{align*}
However, \AC{by Claim~\ref{claim:shortfall-equation}, the long-term average cost satisfies} 
\begin{align*}
V^\textsc{LinAlloc} = \tfrac{1}{m}\sum_{i\in\mathcal{S}^\textsc{lp}\setminus\{i^\textsc{lp}\}}V_i(f_i)
+\tfrac{1}{m}V_{i^\textsc{lp}}(f_{i^\textsc{lp}}-s_{i^\textsc{lp}}^\textsc{lp}),
\end{align*}
which, together with the monotonicity of $V_{i^\textsc{lp}}(\cdot)$ gives us
\begin{align}
V^\textsc{LinAlloc} - \tfrac{1}{m}V_{i^\textsc{lp}}(f_{i^\textsc{lp}}) \le \tfrac{1}{m}\sum_{i\in\mathcal{S}^\textsc{lp}}\left(1-\tfrac{s_i^\textsc{lp}}{f_i}\right)V_i(f_i).
\label{eq:linalloc-le-lp}
\end{align}
Similar arguments on the right side of \eqref{eq:lp-le-cp} give us
\begin{align}
\frac{1}{m}\sum_{i\in\mathcal{S}^\textsc{cp}}\left(1-\frac{s_i^\textsc{cp}}{f_i}\right)V_i(f_i)
\le \bar{V} + \frac{1}{m}V_{i^\textsc{cp}}(f_{i^\textsc{cp}}).
\label{eq:cp-le-vbar}
\end{align}
Substituting \eqref{eq:linalloc-le-lp} and \eqref{eq:cp-le-vbar} into
\eqref{eq:lp-le-cp} gives us
\begin{align*}
V^\textsc{LinAlloc} \le \bar{V} + \tfrac{1}{m}\left(V_{i^\textsc{lp}}(f_{i^\textsc{lp}})+V_{i^\textsc{cp}}(f_{i^\textsc{cp}})\right).
\end{align*}
From the assumptions on \NewAfterReview{$\{F_i(t)\}$ in Sec.~\ref{sec:assumptions}}, $\{f_i\}$ are bounded almost surely, and there is a $C$ such that
\begin{align*}
{\scriptstyle V^\textsc{LinAlloc} - \bar{V} \le \tfrac{C}{m},}
\end{align*}
which concludes the proof of Theorem~\ref{thm:perf-guarantee}.
\end{proof}

\FinalVersion{Note that \eqref{prog:lp} effectively solves the problem with a gap of at most one user.
Since the cost we have considered is the \emph{average} across the $m$ users, the simple policy of Theorem~\ref{thm:lower-bound} with the solution of \eqref{prog:lp} is within
$\mathcal{O}\!\!\left(\frac{1}{m}\right)$ of the optimum.
}

\section{Unknown Consumption Statistics}
\label{sec:unknown}

\AC{In this section, we consider the objective in \eqref{eq:EXP-avg-dissatisfaction}.
Here, the ergodic consumption
rates $\{f_i\}$ are not known, but we know the distributions $\{p_i\}$ of $\{f_i\}$.
We assume $\{p_i\}$ have finite supports $\{[a_i,b_i]\}$. 
}
\AC{As in the known consumption case, 
we solve the static non-convex optimization problem in \eqref{eq:exp-conc-prog} and then employ the simple policy in Theorem~\ref{thm:lower-bound}.
Defining $\mathcal{K}_i(s_i)=\EX_{f_i\sim p_i}\left[V_i\left((f_i-s_i)^+\right)\right]$, 
\eqref{eq:exp-conc-prog} becomes}
\begin{align}
    \underset{\{s_i\mathop{:}0\le s_i\le b_i\}}{\text{minimize}}\ & \ \ \frac{1}{m}\sum_{i=1}^m \mathcal{K}_i(s_i)
    \ \ \text{s.t.} \ \ \sum_{i=1}^m s_i \le \bar{c}.
\label{eq:unknown-symmetric}
\end{align}
\AC{While an intractable problem in general,
we consider the case where $\{p_i\}$ are {\em non-increasing} over their respective supports.
Many well known densities  satisfy this including uniform, truncated exponential, and Gaussian restricted above mean.
}

\AC{
\begin{claim}
When $p_i$ is non-increasing, $\mathcal{K}_i(\cdot)$ is a
non-increasing function that is concave over $[0,a_i]$ and convex over $[a_i,b_i]$. 
\label{lem:pi-nonincreasing}
\end{claim}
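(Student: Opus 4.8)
The plan is to analyze $\mathcal{K}_i(s)=\int_{a_i}^{b_i} V_i((f-s)^+)\,p_i(f)\,df$ by splitting the range of $s$ into $[0,a_i]$ and $[a_i,b_i]$, according to whether the kink of $(f-s)^+$ falls inside the support of $p_i$. I would handle monotonicity first: for each fixed $f$ the map $s\mapsto(f-s)^+$ is non-increasing and $V_i$ is increasing, so $V_i((f-s)^+)$ is non-increasing in $s$; averaging against the non-negative density $p_i$ preserves this, so $\mathcal{K}_i$ is non-increasing on all of $[0,b_i]$.

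For the concave region $[0,a_i]$, note that every $f$ in the support satisfies $f\ge a_i\ge s$, so the positive part is inactive and $\mathcal{K}_i(s)=\int_{a_i}^{b_i}V_i(f-s)\,p_i(f)\,df$. For each fixed $f$, $s\mapsto V_i(f-s)$ is the composition of the concave $V_i$ with an affine map, hence concave; a non-negatively weighted integral of concave functions is concave, so $\mathcal{K}_i$ is concave on $[0,a_i]$. This region is routine and uses nothing about the shape of $p_i$.

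The substantive part is convexity on $[a_i,b_i]$, and this is where the non-increasing assumption on $p_i$ does the work. Here the contribution from $f\le s$ vanishes because $V_i(0)=0$, so $\mathcal{K}_i(s)=\int_s^{b_i}V_i(f-s)\,p_i(f)\,df$ with the lower limit now tied to $s$. Differentiating via Leibniz's rule, the boundary term $-V_i(0)p_i(s)$ is zero, leaving $\mathcal{K}_i'(s)=-\int_s^{b_i}V_i'(f-s)\,p_i(f)\,df$, which after substituting $u=f-s$ becomes $\mathcal{K}_i'(s)=-\int_0^{b_i-s}V_i'(u)\,p_i(u+s)\,du$. I would then show $\mathcal{K}_i'$ is non-decreasing directly: for $s_1<s_2$, forming $\mathcal{K}_i'(s_2)-\mathcal{K}_i'(s_1)$ and splitting the longer integral at $b_i-s_2$ leaves $\int_0^{b_i-s_2}V_i'(u)[p_i(u+s_1)-p_i(u+s_2)]\,du$ plus a tail over $[b_i-s_2,b_i-s_1]$, and both pieces are non-negative since $V_i'\ge0$, $p_i\ge0$, and $p_i(u+s_1)\ge p_i(u+s_2)$ by monotonicity of $p_i$. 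Hence $\mathcal{K}_i'$ is non-decreasing and $\mathcal{K}_i$ is convex on $[a_i,b_i]$.

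The main obstacle is precisely this last step, because the integrand $V_i(f-s)$ is concave in $s$ for every fixed $f$, which naively suggests $\mathcal{K}_i$ should be concave everywhere; convexity on $[a_i,b_i]$ survives only through the interplay between the shrinking integration domain and the requirement that the clipped mass near the kink sit where $p_i$ is largest, which is exactly the non-increasing hypothesis. I expect the cleanest argument to be the first-derivative monotonicity above, since it avoids imposing smoothness on $V_i$ or $p_i$; as a sanity check the equivalent second-derivative computation reads $\mathcal{K}_i''(s)=V_i'(0)p_i(s)+\int_s^{b_i}V_i''(f-s)\,p_i(f)\,df\ge p_i(s)\,V_i'(b_i-s)\ge0$, where the bound uses $p_i(f)\le p_i(s)$ for $f\ge s$ together with $V_i''\le0$. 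Care at the junction point $s=a_i$, matching the one-sided derivatives, completes the picture.
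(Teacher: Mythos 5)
Your proposal is correct, and its primary route differs from the paper's in a way worth noting. The paper works with second derivatives throughout: on $(0,a_i)$ it applies the Leibniz rule to get $\mathcal{K}_i''(s)=\int_{a_i}^{b_i}V_i''(f-s)p_i(f)\,df\le 0$, and on $(a_i,b_i)$ it computes $\mathcal{K}_i''(s)=V_i'(0)p_i(s)+\int_s^{b_i}V_i''(f-s)p_i(f)\,df$ and bounds the integral using $p_i(f)\le p_i(s)$ together with $V_i''\le 0$, arriving at exactly the inequality $\mathcal{K}_i''(s)\ge p_i(s)V_i'(b_i-s)\ge 0$ that you relegate to a sanity check. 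Your main argument instead avoids second derivatives entirely: on $[0,a_i]$ you use closure of concavity under non-negative mixtures (no calculus at all), and on $[a_i,b_i]$ you show $\mathcal{K}_i'$ is non-decreasing by the substitution $u=f-s$ and a direct splitting of $\mathcal{K}_i'(s_2)-\mathcal{K}_i'(s_1)$ into two non-negative pieces, which needs only $V_i'\ge 0$ and monotonicity of $p_i$ pointwise. What your route buys is weaker regularity hypotheses ($V_i$ once differentiable rather than twice, and no smoothness of $p_i$ anywhere); what the paper's route buys is brevity and a quantitative lower bound on the second derivative. You also prove the non-increasing property of $\mathcal{K}_i$ explicitly via pointwise monotonicity of $s\mapsto V_i((f-s)^+)$, a step the paper's appendix leaves implicit, and you flag the matching of one-sided derivatives at $s=a_i$, which the paper (working only on open intervals) also does not address.
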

\begin{proof}
Please see Appendix~\ref{sec:conv-conc}.
\end{proof}
}

\AC{Thus, when $\{p_i\}$ are non-increasing, the objective becomes minimization of sum of non-increasing functions subject to linear constraint, where each function is part concave and part convex.
Though this structure may be of independent mathematical interest,  solving it in its full generality seems quite challenging, and we leave it for future work.
We consider the symmetric case where $a_i=a$, $b_i=b$, and $\mathcal{K}_i=\mathcal{K}$
for all $i$. This happens when $\{V_i\}$ are the same and the consumption rates 
are all drawn from the same distribution. }
\AC{This is a natural assumption for multimedia streaming and energy grid where all the users are treated equal.
}

We first state and prove Claim~\ref{claim:convex-equal} and Claim~\ref{claim:concave-corners} which
characterize the optimal solution of \eqref{eq:unknown-symmetric}.

\begin{claim}
There exists a (global) optimum $\{s_i^*\}$ of \eqref{eq:unknown-symmetric}, with
\FinalVersion{$\mathcal{S}^*_\text{conv}=\{i:a\le s_i^*\le b\}$},
such that $s_i^*=s_j^*$ for $i,j\in\mathcal{S}^*_\text{conv}$.
\label{claim:convex-equal}
\end{claim}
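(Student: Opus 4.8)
The plan is to take an arbitrary global optimum $\{s_i^*\}$ of \eqref{eq:unknown-symmetric} (one exists, since the objective is continuous and the feasible set $\{0\le s_i\le b,\ \sum_i s_i\le\bar{c}\}$ is compact) and \emph{symmetrize} it over the convex region in a single step, using only the convexity of $\mathcal{K}$ on $[a,b]$ from Claim~\ref{lem:pi-nonincreasing}. Concretely, let $\mathcal{S}^*_\text{conv}=\{i:a\le s_i^*\le b\}$, put $n=|\mathcal{S}^*_\text{conv}|$, and let $\sigma=\sum_{i\in\mathcal{S}^*_\text{conv}}s_i^*$ be the total allocation over that region. I would then define a new candidate $\{s_i'\}$ by replacing every convex-region variable with their common average $\bar{s}=\sigma/n$ and leaving all remaining variables unchanged, so that by construction all of the new convex-region allocations are equal.

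The first step is to verify feasibility of $\{s_i'\}$. Because averaging preserves the total, $\sum_i s_i'=\sum_i s_i^*\le\bar{c}$, so the budget constraint is inherited. Moreover $\bar{s}$ is the equal-weight convex combination of the points $\{s_i^*:i\in\mathcal{S}^*_\text{conv}\}$, each of which lies in the interval $[a,b]$; since $[a,b]$ is convex, $\bar{s}\in[a,b]\subseteq[0,b]$, and the box constraints $0\le s_i'\le b$ hold.

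The second step is the cost comparison, which is where the convexity is used. Since $\mathcal{K}$ is convex on $[a,b]$ and each $s_i^*$ with $i\in\mathcal{S}^*_\text{conv}$ lies there, Jensen's inequality gives $\sum_{i\in\mathcal{S}^*_\text{conv}}\mathcal{K}(s_i^*)\ge n\,\mathcal{K}(\bar{s})=\sum_{i\in\mathcal{S}^*_\text{conv}}\mathcal{K}(s_i')$, while every term with $i\notin\mathcal{S}^*_\text{conv}$ is untouched. Hence the objective of $\{s_i'\}$ does not exceed that of $\{s_i^*\}$; as $\{s_i^*\}$ is globally optimal and $\{s_i'\}$ is feasible, $\{s_i'\}$ is itself a global optimum, and its own convex region consists entirely of the value $\bar{s}$, giving the asserted property.

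I do not expect a genuinely hard analytic step here: the whole argument is one application of Jensen's inequality. The only points needing care are bookkeeping. I must confirm that the common average $\bar{s}$ stays inside $[a,b]$ so that (i) the replaced variables still belong to the convex region of the \emph{new} solution, making the conclusion apply to the correct index set, and (ii) the upper box constraint $s_i'\le b$ is not violated; both follow from convexity of $[a,b]$. Finally, I would note the degenerate cases $n\le 1$, for which the statement is vacuous or immediate.
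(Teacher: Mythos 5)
Your proof is correct and takes essentially the same route as the paper: symmetrizing the allocation over the convex region $[a,b]$ via Jensen's inequality, noting that the average preserves the budget constraint and stays in $[a,b]$. The only difference is that you average all convex-region variables in a single step, whereas the paper averages pairs; your one-shot version is marginally cleaner since it avoids any implicit iteration of the pairwise exchange.
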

\begin{proof}
Suppose $s_i^*>s_j^*\ge a$ for some $i,j\in\mathcal{S}^*_\text{conv}$.
Since $\mathcal{K}(\cdot)$ is convex in $[a,b]$, Jensen's inequality
gives
\begin{align*}
    {\scriptstyle2\mathcal{K}\left(\tfrac{s_i^*+s_j^*}{2}\right) \le \mathcal{K}(s_i^*) + \mathcal{K}(s_j^*).}
\end{align*}
So we can reduce the cost by setting both users to $\frac{s_i^*+s_j^*}{2}$.
(Note that the constraint on cumulative service rate being less than $\bar{c}$
is not violated by this transformation.)
So we can always find an optimal $\{s_i^*\}$ with $s_i^*=s_j^*$
for $i,j\in\mathcal{S}^*_\text{conv}$.
\end{proof}

\begin{claim}
There exists a (global) optimum of \eqref{eq:unknown-symmetric} $\{s_i^*\}$, with
\FinalVersion{$\mathcal{S}^*_\text{conc}=\{i:0\le s_i^*< a\}$},
such that \FinalVersion{$|\{i \in \mathcal{S}^*_\text{conc}:s_i^*\neq0\}|\le1$}, and $\{s_i^*\}$ satisfies Claim~\ref{claim:convex-equal}.
\label{claim:concave-corners}
\end{claim}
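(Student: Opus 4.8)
The plan is to mimic the corner-point reasoning of Claim~\ref{claim:only-corners}, but adapted to the part-concave/part-convex structure of $\mathcal{K}$ established in Claim~\ref{lem:pi-nonincreasing} and made compatible with Claim~\ref{claim:convex-equal}. I would start from the set $\mathcal{O}$ of global optima of \eqref{eq:unknown-symmetric} that already satisfy Claim~\ref{claim:convex-equal}; this set is nonempty by that claim. Within $\mathcal{O}$, I would pick a minimizer $\{s_i^*\}$ of the integer-valued quantity $|\{i\in\mathcal{S}^*_\text{conc}:s_i^*\neq0\}|$ (well-defined since this count takes values in the nonnegative integers). The goal is then to show this minimum is at most $1$, by deriving a contradiction from the assumption that two distinct concave-region users $i,j$ have $0<s_j^*\le s_i^*<a$.

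The key tool is that $\mathcal{K}$ is concave on $[0,a]$, so for a fixed sum $\sigma=s_i^*+s_j^*$ the map $t\mapsto\mathcal{K}(t)+\mathcal{K}(\sigma-t)$ is concave and symmetric about $\sigma/2$, hence nonincreasing as $t$ moves from $\sigma/2$ toward the boundary of the feasible interval. Because both values lie in $[0,a)$ we have $\sigma<2a$, so there are two cases. If $\sigma<a$, I would transfer all of user $j$'s allocation to user $i$, setting $s_i=\sigma<a$ and $s_j=0$; both remain in the concave region, the cost does not increase, the convex-region users are untouched (so Claim~\ref{claim:convex-equal} still holds), and the nonzero-concave count drops by one. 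If $a\le\sigma<2a$, I would instead raise $s_i$ to exactly $a$ and lower $s_j$ to $\sigma-a\in[0,a)$; concavity again guarantees the cost does not increase, and now user $i$ has left the concave set for the boundary of the convex region, so the nonzero-concave count again drops by one.

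The main obstacle is this second case, where moving a user up to $a$ can break Claim~\ref{claim:convex-equal} whenever the existing convex-region users share a common value $v\neq a$. To repair this I would re-apply the equalization step of Claim~\ref{claim:convex-equal}: replace every convex-region allocation (now including user $i$ at $a$) by their common mean $\bar v$. Since $\mathcal{K}$ is convex on $[a,b]$, Jensen's inequality shows this does not increase the cost; the mean of values in $[a,b]$ stays in $[a,b]$, so feasibility and membership in the convex region are preserved; and the total allocation is unchanged, so the budget $\sum_i s_i\le\bar c$ still holds. After this repair the solution again lies in $\mathcal{O}$ but with a strictly smaller nonzero-concave count, contradicting minimality. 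I would close by noting that the resulting $\{s_i^*\}$ simultaneously satisfies Claim~\ref{claim:convex-equal} and has at most one nonzero concave-region allocation, as required.
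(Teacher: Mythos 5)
Your proof is correct, but it takes a genuinely different route from the paper's. The paper proves Claim~\ref{claim:concave-corners} by reusing the corner-point machinery of Claim~\ref{claim:only-corners}: holding the convex-region users fixed, the concave-region users face a concave minimization over a polyhedron, whose minimum sits at an extreme point, and extreme points have at most one coordinate strictly between $0$ and the interval endpoint; the half-open interval $[0,a)$ is dispatched with the remark that any user landing exactly at $a$ is by definition absorbed into $\mathcal{S}^*_\text{conv}$, where Claim~\ref{claim:convex-equal}'s transformation applies. You instead run a self-contained exchange argument: among global optima satisfying Claim~\ref{claim:convex-equal}, pick one minimizing the number of nonzero concave-region coordinates, and show two such coordinates can always be merged without increasing cost---either fully (when $\sigma=s_i^*+s_j^*<a$) or by pushing one user up to $a$ (when $a\le\sigma<2a$)---because $t\mapsto\mathcal{K}(t)+\mathcal{K}(\sigma-t)$ is concave and symmetric about $\sigma/2$, hence nonincreasing away from the center. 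What your approach buys is rigor exactly where the paper is terse: the case where the transfer ejects a user into the convex region can break Claim~\ref{claim:convex-equal}, and you repair this explicitly with a Jensen re-equalization step, while your minimal-count extremal choice guarantees the merge-and-repair process terminates; by contrast, the paper's instruction to ``apply Claim~\ref{claim:only-corners}'s argument on $[0,a)$'' quietly skirts the non-compactness of the half-open box and leaves the reclassification-plus-re-equalization bookkeeping informal. What the paper's approach buys is brevity: it leans on the already-proved polyhedral fact rather than redoing the perturbation analysis from scratch.
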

\begin{proof}
This is similar to Claim~\ref{claim:only-corners}, with the
functions here being $\mathcal{K}(\cdot)$ rather than $V_i(\cdot)$.
In the set $\mathcal{S}^*_\text{conc}$, the search for the optimal
solution can be restricted to \FinalVersion{$s_i\in[0,a)$} (by definition of $\mathcal{S^*_\text{conc}}$),
and in this region, $\mathcal{K}(\cdot)$ is concave.
Hence, for these users, similar arguments as Claim~\ref{claim:only-corners} apply,
and give \NewAfterReview{the cardinality bound}.
\FinalVersion{One difference with Claim~\ref{claim:only-corners} is that the interval is open at $a$,
avoiding the $s_i=a$ case we had there.
This is because if $s_i=a$ for any $i$, it is a part of the convex interval and the transformation of 
Claim~\ref{claim:convex-equal} applies.%
\footnote{We thank the anonymous reviewer who pointed this out.}}
None of \FinalVersion{the users in $\mathcal{S}^*_\text{conc}$} are affected by Claim~\ref{claim:convex-equal}'s transformation,
so the overall solution also satisfies Claim~\ref{claim:convex-equal}.
\end{proof}

\begin{algorithm}
\caption{\NoFbAlgName}
\label{alg:symmetric}
\begin{algorithmic}[1]
\State Initialize $V^*\gets\infty$, $s_i^*\gets0\ \forall \ i\in\{1,2,\ldots,m\}$
\For {\FinalVersion{$n\in\{0,1,\ldots,m-1\}$}}
        \State $V,\beta\gets$ solution of \eqref{prog:find-vstar}
        \If {$V<V^*$}
            \State $V^*\gets V$, $\beta^*\gets\beta$, \FinalVersion{$n^*\gets n$}
        \EndIf
\EndFor
\State $s_1^*\gets\beta^*$
\For {\FinalVersion{$i=2$ \textbf{to} $n^*+1$}}
    \State \FinalVersion{$s_i^*\gets \frac{\bar{c}-\beta^*}{n^*}$}
\EndFor
\State At each time $t$, allocate using $S_i(t)=s_i^*\frac{c(t)}{\bar{c}}$
\end{algorithmic}
\end{algorithm}

Claim~\ref{claim:convex-equal} and Claim~\ref{claim:concave-corners} imply that there is
\NewAfterReview{at least one} optimal solution that has
\FinalVersion{one user with service rate $\beta\in[0,a)$, 
and $n$ users with service rate $r\in[a,b]$ with $r=\frac{\bar{c}-\beta}{n}$.
The remaining $m-n-1$ users have a service rate $0$.
If we have the value of $n$, we can find the optimal
$\beta$ by solving \eqref{prog:find-vstar}:
\begin{align}
&V^*\!=\min_\beta\! {\scriptstyle \left(\!\!\mathcal{K}\!(\beta)
+n\mathcal{K}\!\left(\!\tfrac{\bar{c}-\beta}{n}\!\right)
\!+\! (m\!-\!n\!-\!1)\mathcal{K}\!(0)\!\right)\!},
\label{prog:find-vstar} \\
&\text{s.t.}\ \max(0,\bar{c}\!-\!nb) \le \beta \le \min(a, \bar{c}\!-\!na), \nonumber
\end{align}
to get the optimal $V^*$ and $\beta^*$ (given $n$).
If some $n$ results in an empty feasible region
for $\beta$,
we set $V^*=\infty$.}
Since \eqref{prog:find-vstar} is an optimization in one variable
over a bounded range, it can be solved using \NewAfterReview{brute force}.
These observations directly lead us to Alg.~\ref{alg:symmetric} and  Theorem~\ref{thm:symmetric}.%
\footnote{While \AlgName{} has an $\mathcal{O}\left(\frac{1}{m}\right)$ gap,
\NoFbAlgName{} achieves optimal cost.
This is because the ``symmetric'' assumption helps us solve \eqref{eq:unknown-symmetric}
exactly.
If we make the same assumption in the \AlgName{} setting, we can solve the optimization program
faster than \NoFbAlgName{} since we know $\beta=\bar{c}-na$  as $a=b$.
Note that the time complexity of \NoFbAlgName{} is $\mathcal{O}\left(mT\right)$,
where $T$ is the complexity of \eqref{prog:find-vstar}.}

\begin{theorem}
The allocation policy in \NoFbAlgName{} (Alg.~\ref{alg:symmetric}) achieves 
the minimum defined by \eqref{eq:unknown-symmetric}.
\label{thm:symmetric}
\end{theorem}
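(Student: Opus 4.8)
The plan is to show that the allocation $\{s_i^*\}$ returned by \NoFbAlgName{} is a global minimizer of \eqref{eq:unknown-symmetric}; the claimed long-term optimality then follows immediately by applying Theorem~\ref{thm:lower-bound} with optimality gap $\Delta=0$ to the simple policy $S_i(t)=s_i^* c(t)/\bar{c}$. First I would invoke Claims~\ref{claim:convex-equal} and~\ref{claim:concave-corners} to fix the combinatorial structure of at least one global optimum: every user whose service lies in the convex region $[a,b]$ shares a common rate, at most one user has service in $(0,a)$, and all remaining users receive $0$. Writing $n$ for the number of convex-region users and $\beta\in[0,a]$ for the single (possibly zero) concave-region rate, this collapses the $m$-dimensional search into a discrete choice of $n$ together with a one-dimensional choice of $\beta$.

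Next I would show the budget is binding at some optimum. Since each $\mathcal{K}$ is non-increasing by Claim~\ref{lem:pi-nonincreasing}, whenever $\sum_i s_i<\bar{c}$ and some user is served below $b$, raising that user's rate cannot increase the objective; hence (for the nontrivial regime $\bar{c}\le mb$) we may assume $\sum_i s_i=\bar{c}$, forcing the $n$ convex users to the common rate $r=(\bar{c}-\beta)/n$. The feasibility requirements $a\le r\le b$ and $0\le\beta\le a$ then translate exactly into the interval $\max(0,\bar{c}-nb)\le\beta\le\min(a,\bar{c}-na)$ in \eqref{prog:find-vstar}, and the resulting cost matches $\mathcal{K}(\beta)+n\mathcal{K}(r)+(m-n-1)\mathcal{K}(0)$ up to the fixed $\tfrac{1}{m}$ normalization. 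Thus, for each fixed $n$, the best achievable structured cost is precisely the value $V^*$ produced by \eqref{prog:find-vstar}.

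I would then note that \eqref{prog:find-vstar} is the minimization of a single-variable function over a bounded interval, so it can be solved exactly by the brute-force search referenced after the program. The algorithm evaluates this for every $n$ and retains the minimizer, so it computes $\min_n V^*$ over all admissible structures and reconstructs the corresponding $\{s_i^*\}$. Because the global optimum of \eqref{eq:unknown-symmetric} is attained by one of these structured candidates, this minimum equals the optimal value and $\{s_i^*\}$ attains it; Theorem~\ref{thm:lower-bound} then upgrades this to long-term optimality of the policy.

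The hard part will be verifying that the enumeration $n\in\{0,\ldots,m-1\}$ is exhaustive and that the boundary bookkeeping is airtight. In particular, the configuration in which all $m$ users lie in the convex region corresponds to $n=m$, which is not in the loop; one must check it is subsumed by the $n=m-1$ iteration at the endpoint $\beta=a$ (where, by Claim~\ref{claim:convex-equal}, the special user formally rejoins the equal-rate convex block) or else excluded by the operating regime. Similarly, the degenerate $n=0$ case (no convex users, so $\beta$ is forced to $\bar{c}$) and the open-versus-closed treatment of $\beta=a$ must be handled so that every optimal structure is covered and none double-counted. Confirming that a single structured candidate per $n$ suffices, and that the one-dimensional subproblem is genuinely solved to optimality rather than approximately, is where the argument needs the most care.
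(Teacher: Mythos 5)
Your proposal follows the same route as the paper: the paper's entire ``proof'' of Theorem~\ref{thm:symmetric} is the discussion preceding it (Claims~\ref{claim:convex-equal} and~\ref{claim:concave-corners} fix the structure of some optimum, budget saturation gives $r=(\bar{c}-\beta)/n$, and enumeration over $n$ plus brute force on \eqref{prog:find-vstar} selects the best structured candidate). However, the point you defer as ``the hard part''---whether the enumeration $n\in\{0,1,\ldots,m-1\}$ is exhaustive---is a genuine gap, and it cannot be closed in either of the two ways you suggest. The all-convex configuration ($n=m$) is \emph{not} subsumed by the $n=m-1$ iteration at $\beta=a$: that endpoint yields one user at $a$ and $m-1$ users at $(\bar{c}-a)/(m-1)$, which coincides with the equal split $\bar{c}/m$ only when $\bar{c}=ma$. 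Nor is it excluded by any stated operating regime, since the paper nowhere assumes $\bar{c}\le ma$. Concretely, take $m=2$, $V_i(x)=x$, $p_i$ uniform on $[1,2]$ (so $a=1$, $b=2$), and $\bar{c}=3$. Then $\mathcal{K}(s)=\tfrac{3}{2}-s$ on $[0,1]$ and $\mathcal{K}(s)=\tfrac{1}{2}(2-s)^2$ on $[1,2]$, which is non-increasing, concave-then-convex as required. The loop of Alg.~\ref{alg:symmetric} has two iterations: $n=0$ is infeasible ($3\le\beta\le 1$), and $n=1$ forces $\beta=1$, giving the allocation $(1,2)$ with cost $\mathcal{K}(1)+\mathcal{K}(2)=\tfrac{1}{2}$; yet the feasible point $(\tfrac{3}{2},\tfrac{3}{2})$ costs $2\mathcal{K}(\tfrac{3}{2})=\tfrac{1}{4}$. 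So the algorithm's output does not achieve the minimum of \eqref{eq:unknown-symmetric} on this instance, and no amount of ``boundary bookkeeping'' within the stated loop can recover it.

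You should be aware that this gap is inherited from the paper itself: the assertion that some optimum has ``one user with service rate $\beta\in[0,a)$, and $n$ users with service rate $r\in[a,b]$'' silently assumes an optimum with at least one user strictly below $a$, which Claims~\ref{claim:convex-equal} and~\ref{claim:concave-corners} do not guarantee once $\bar{c}>ma$ (they permit the concave set to be empty). A correct proof along your (and the paper's) lines must do one of two things: (i) add the standing assumption $\bar{c}\le ma$, under which every feasible allocation has some user in $[0,a)$ (with the boundary case $\bar{c}=ma$ handled by $\beta=a$), so your argument goes through essentially verbatim; or (ii) extend the enumeration by the $n=m$ candidate, namely all users at rate $\min(\bar{c}/m,\,b)$, feasible whenever $\bar{c}\ge ma$, with cost $m\,\mathcal{K}(\min(\bar{c}/m,b))$ compared alongside the others. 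With that one extra candidate, your budget-saturation argument (raise rates until either the budget binds or every user is at $b$) does make the enumeration exhaustive, and the remainder of your proof---the mapping of structures to \eqref{prog:find-vstar}, exact one-dimensional minimization, and the appeal to Theorem~\ref{thm:lower-bound} with $\Delta=0$---is sound.
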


\section{Conclusion \& Future Work}
\label{sec:conclusion}

In this paper, we have considered a resource allocation problem to minimize
user dissatisfaction due to shortfalls in the allocated (time-varying) resource.
This framework is applicable in varied scenarios such as wireless multimedia streaming,
renewable energy grid, and supply chain logistics.
We first reduced the resource allocation problem to a static non-convex optimization problem,
and exploited the combinatorial structure of the problem to develop algorithms that are either near-optimal (when the users' consumption
statistics are known) or optimal (when the consumption statistics are unknown, but the users are symmetric).

This still leaves us with many interesting open questions.
We highlight \NewAfterReview{four} here.
(i)~Can we solve \eqref{eq:conc-prog} with a smaller gap than
$\mathcal{O}\left(\frac{1}{m}\right)$ in polynomial time?
While \cite{BhimarajuZC2022} tackles this under
some restrictive assumptions (see Remark~\ref{rem:diff-tcom}),
it is still open if it can be solved in the general setting considered in this
paper.
(ii)~While this paper minimizes the cumulative dissatisfaction, it is
also important to consider fairness in the resource allocation \cite{KodyWM2022}.
Solving the problems considered in this paper in conjunction with fairness
considerations is an important future direction.
(iii)~Can we solve \eqref{eq:exp-conc-prog} when the users are not symmetric?
In particular, it would be interesting to see if \eqref{eq:exp-conc-prog} can be solved for
asymmetric users when $\{V_i(\cdot)\}$ have more structure.
\NewAfterReview{(iv) Can we derive similar results when the buffer capacity is finite?
Specifically, does knowledge of $\{Q_i(t)\}$ help with the scheduling in this case?}
\FinalVersion{We briefly comment on how a finite buffer affects Claim~\ref{claim:shortfall-equation}
in Appendix~\ref{sec:extended}.
Note that we did not make use of the infinite
buffer assumption beyond Claim~1 in this paper.}

\appendices

\section{Proof of Claim~\ref{lem:pi-nonincreasing}}
\label{sec:conv-conc}

For $s\in[0,b_i]$, the function we are interested in is
\begin{align*}
    {\scriptstyle \mathcal{K}_i(s)
    = \EX_{f_i\sim p_i}\left[V_i\left((f_i-s)^+\right)\right]
    = \int_{a_i}^{b_i} V_i\left((f_i-s)^+\right) p_i(f_i)df_i.}
\end{align*}
Using $V_i(0)=0$, this can be simplified to
\begin{align*}
    {\scriptstyle \mathcal{K}_i(s)=
    \int_{\max\{s,a_i\}}^{b_i} V_i(f_i-s)p_i(f_i)df_i.} 
\end{align*}

Using the Leibniz rule of differentiation (see \cite{ProtterM2012}, for example)
in the $s\in(0,a_i)$ region, we get
\begin{align*}
    {\scriptstyle\mathcal{K}_i''(s) = \int_{a_i}^{b_i} V_i''(f_i-s)p_i(f_i)df_i.}
\end{align*}
However, $V_i(\cdot)$ is concave, which implies $V_i''(\cdot)\le0$,
giving
\begin{align}
    {\scriptstyle\mathcal{K}_i''(s) \le 0 \quad \text{for} \ s\in(0,a_i).}
    \label{eq:conc-le-a}
\end{align}

In the $s\in(a_i,b_i)$ region, we get (using $V_i(0)=0$)
\begin{align*}
{\scriptstyle
    \mathcal{K}_i'(s) 
    = -\int_s^{b_i} V_i'(f_i-s)p_i(f_i)df_i.}
\end{align*}
Using the Leibniz rule again gives us
\begin{align*}
    {\scriptstyle\mathcal{K}_i''(s) = V_i'(0)p_i(s)+\int_s^{b_i}V_i''(f_i-s)p_i(f_i)df_i.}
\end{align*}

Using the non-increasing property of $p_i$, 
we have
$\int_s^{b_i}V_i''(f_i-s)p_i(f_i)df_i\ge p(s)\int_s^{b_i}V_i''(f_i-s_i)df_i$, which gives
\begin{align}
    {\scriptstyle\mathcal{K}_i''(s) \ge 0 \quad \text{for} \ s \in (a_i,b_i).}
    \label{eq:conv-ge-a}
\end{align}
Using \eqref{eq:conc-le-a} and \eqref{eq:conv-ge-a} together gives us
the required result.

\section{Proof of $s<f\implies\lim_{T\to\infty}\frac{Q(T)}{T}=0$}
\label{sec:extended}

While proving Claim~\ref{claim:shortfall-equation}, we assumed that when $s_i<f_i$,
we have $\lim_{T\to\infty}\frac{Q_i(T)}{T}=0$.
In Theorem~\ref{thm:stable-queue} below, we prove this statement formally.

To avoid clutter, we drop the subscript  $i$ and define the system as follows.
We have a queue $Q(t)$ that is being served by a process $S(t)$
and utilized by a consumption process $F(t)$.
We have the following system dynamics:
\begin{align*}
Q(t+1) = \left(Q(t)+S(t)-F(t)\right)^+.
\end{align*}
The following theorem holds.
\begin{theorem}
Assume the long-term averages for $S(\cdot)$ and $F(\cdot)$ exist:
\begin{align*}
\lim_{T\to\infty}\frac{1}{T}\sum_{t=1}^T S(t) = s\quad\text{and}\quad \lim_{T\to\infty}\frac{1}{T}\sum_{t=1}^T F(t) = f.
\end{align*}
If $s<f$, then
\begin{align*}
\lim_{T\to\infty}\frac{Q(T)}{T} = 0.
\end{align*}
\label{thm:stable-queue}
\end{theorem}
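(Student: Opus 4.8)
The plan is to argue pathwise: the hypotheses only assert that the Cesàro averages converge, so I fix a sample path on which $\frac1T\sum_{t=1}^T S(t)\to s$ and $\frac1T\sum_{t=1}^T F(t)\to f$ and prove the deterministic statement $Q(T)/T\to 0$. Write $A(t)=S(t)-F(t)$ and $P(t)=\sum_{\tau=1}^t A(\tau)$ with $P(0)=0$; subtracting the two convergent averages gives $P(T)/T\to s-f=:-\delta$ with $\delta>0$. The first step is to unroll the reflected recursion into its Lindley form. Using the identity $((x)^++a)^+=\max(x+a,\,a,\,0)$ repeatedly (equivalently, a short induction on $T$) one obtains
\[
Q(T)=\max\Big(Q(1)+P(T-1),\ P(T-1)-\min_{0\le j\le T-1}P(j)\Big).
\]
(Intuitively: if $t^\star$ is the last time in $\{1,\dots,T\}$ at which the buffer is empty, no reflection occurs on $(t^\star,T]$, so $Q(T)=P(T-1)-P(t^\star-1)$; the Lindley form merely packages this together with the never-empty case.)

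Dividing by $T$ gives
\[
\frac{Q(T)}{T}=\max\left(\frac{Q(1)+P(T-1)}{T},\ \frac{P(T-1)}{T}-\frac{1}{T}\min_{0\le j\le T-1}P(j)\right).
\]
The first argument tends to $s-f=-\delta<0$, since $Q(1)/T\to0$ and $P(T-1)/T\to s-f$. By continuity of $\max$, the whole statement therefore reduces to showing that the running minimum satisfies $\frac1T\min_{0\le j\le T-1}P(j)\to s-f$: granting this, the second argument tends to $(s-f)-(s-f)=0$, whence $Q(T)/T\to\max(-\delta,0)=0$.

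The heart of the proof, and the step I expect to be the main obstacle, is controlling this running minimum, because plain Cesàro convergence of $P(j)/j$ does not by itself bound $P$ uniformly over the entire window $[0,T-1]$. The $\limsup$ direction is immediate: $\min_{j}P(j)\le P(T-1)$ gives $\limsup_T\frac1T\min_{0\le j\le T-1}P(j)\le s-f$. For the matching $\liminf$ I would use $\delta>0$ decisively. Fix $\epsilon\in(0,\delta)$; since $P(j)/j\to-\delta$ there is a $J$ such that $P(j)\ge(-\delta-\epsilon)j\ge(-\delta-\epsilon)T$ for all $J\le j\le T-1$, while over the finitely many indices $0\le j<J$ the values $P(j)$ are bounded below by a constant $L$ independent of $T$. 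Hence $\min_{0\le j\le T-1}P(j)\ge\min(L,(-\delta-\epsilon)T)$, so $\frac1T\min_{0\le j\le T-1}P(j)\ge\min(L/T,\,-\delta-\epsilon)\to-\delta-\epsilon$; letting $\epsilon\downarrow0$ yields $\liminf_T\frac1T\min_{0\le j\le T-1}P(j)\ge s-f$. This is precisely where $s<f$ enters: it forces $P(j)$ to drift to $-\infty$ linearly, so its running minimum is eventually attained near the right endpoint and asymptotically tracks $P(T-1)$. Combining the two bounds gives $\frac1T\min_{0\le j\le T-1}P(j)\to s-f$, which closes the reduction and proves $Q(T)/T\to 0$.
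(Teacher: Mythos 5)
Your proof is correct, and it takes a genuinely different route from the paper's. The paper works directly on the queue dynamics: it first proves by contradiction that $Q(t)=0$ infinitely often when $s<f$, then, for a given $\epsilon$, regenerates at the last emptying time $T'$ before $T$, telescopes the unreflected recursion $Q(t+1)=Q(t)+S(t)-F(t)$ over $(T',T]$, and bounds the resulting sums via the Ces\`aro convergence of $S$ and $F$ with explicit $\epsilon/4$ bookkeeping. You instead absorb all of that case analysis into the Lindley representation
\[
Q(T)=\max\Bigl(Q(1)+P(T-1),\ P(T-1)-\min_{0\le j\le T-1}P(j)\Bigr),
\]
which is valid here because $Q(1)\ge0$ makes the $j=0$ term redundant, and you reduce the theorem to the running-minimum asymptotics $\tfrac{1}{T}\min_{0\le j\le T-1}P(j)\to s-f$. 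Your two-sided proof of that limit --- the trivial $\limsup$ bound from $\min_j P(j)\le P(T-1)$, and the $\liminf$ bound obtained by splitting $[0,T-1]$ at a fixed index $J$ and using the linear negative drift --- is sound, and it is exactly where $s<f$ enters, playing the role that the contradiction argument and the final drift bound play in the paper. What your route buys: you never need the ``infinitely often empty'' lemma at all (the never-empty scenario is simply the first argument of the max, which is asymptotically negative and hence harmless), and the $\epsilon$-bookkeeping is confined to the scalar partial-sum sequence $P(j)$ rather than interleaved with the queue recursion, which makes the argument more modular and reusable. What the paper's route buys: it is entirely elementary and self-contained, manipulating only the recursion itself with no appeal to (or re-derivation of) the reflection formula, and its explicit tracking of emptying times makes transparent the physical mechanism --- after the last time the buffer empties, the queue follows a negatively drifting walk --- on which both proofs ultimately rest.
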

\begin{proof}
Let us first show that $\{Q(t)\}$ is $0$ infinitely often if $s<f$.
For this, assume the contrary, i.e., there exists an $N_Q\in\mathbb{N}$ such that $Q(t)>0$
for all $t\ge N_Q$.
This means that the system dynamics reduces to 
\begin{align}
Q(t+1)=Q(t)+S(t)-F(t)
\label{eq:no-zero}
\end{align}
for all $t\ge N_Q$.
For any $T>N_Q$, adding up the equations for $t=N_Q,N_Q+1,\ldots,T-1$, we get
\begin{align*}
Q(T)=Q(N_Q)+\sum_{t=N_Q}^{T-1}S(t) - \sum_{t=N_Q}^{T-1}F(t).
\end{align*}
However, since $s<f$, for a large enough $T$, the sum of $\{S(t)\}$ would be
less than the sum of $\{F(t)\}$ (by a large enough margin), giving us $Q(T)<0$,
which is a contradiction.
Thus $\{Q(t)\}$ is $0$ infinitely often.

To prove the theorem, we need to prove that for any $\epsilon>0$, we can find
an $N_\epsilon\in\mathbb{N}$ such that $\left|\frac{Q(T)}{T}\right|<\epsilon$
for all $T\ge N_\epsilon$.
Since the long-term averages for $\{S(t)\}$ and $\{F(t)\}$ exist, we can find
an $\tilde{N}_\epsilon$ such that
\begin{align}
\left|s-\frac{1}{T-1}\sum_{t=1}^{T-1}S(t)\right| < \frac{\epsilon}{4}\quad\text{and}\quad
\left|f-\frac{1}{T-1}\sum_{t=1}^{T-1}F(t)\right| < \frac{\epsilon}{4},
\label{eq:limit-sf}
\end{align}
for all $T\ge \tilde{N}_\epsilon$.
Let us define $N_\epsilon$ as follows:
\begin{align*}
N_\epsilon := \min\{t:Q(t)=0\ \text{and}\ t\ge\tilde{N}_\epsilon\}.
\end{align*}
Since $\{Q(t)\}$ is $0$ infinitely often, $N_\epsilon$ exists (i.e., the minimum is over
a non-empty set).
We show that this $N_\epsilon$ satisfies the limit condition: $\left|\frac{Q(T)}{T}\right|<\epsilon$
for all $T\ge N_\epsilon$.
For any $T\ge N_\epsilon$, define $T'$ as follows:
\begin{align*}
T' := \max\{t:Q(t)=0\ \text{and}\ t\le T\}.
\end{align*}
So $T'$ is the last time $Q(t)$ is $0$ before $t=T$.
Since $T\ge N_\epsilon$ and $Q(N_\epsilon)=0$ (by definition), we have $T'\ge N_\epsilon$.

Using \eqref{eq:limit-sf}, we get
\begin{align*}
(T-1)\left(s-\frac{\epsilon}{4}\right)<\sum_{t=1}^{T-1}S(t)<(T-1)\left(s+\frac{\epsilon}{4}\right)
\end{align*}
and
\begin{align*}
(T-1)\left(f-\frac{\epsilon}{4}\right)<\sum_{t=1}^{T-1}F(t)<(T-1)\left(f+\frac{\epsilon}{4}\right)
\end{align*}
for all $T\ge T'$ (since $T'\ge N_\epsilon\ge\tilde{N}_\epsilon$ by definition).
Since these inequalities are also true for $T=T'$, we can subtract the summation
till $t=T'-1$ to get
\begin{align*}
\sum_{t=T'}^{T-1}S(t) < (T-T')s + (T+T')\frac{\epsilon}{4}
\end{align*}
and
\begin{align*}
\sum_{t=T'}^{T-1}F(t) > (T-T')f - (T+T')\frac{\epsilon}{4}.
\end{align*}
Observe that from the definition of $T'$, we have $Q(t)>0$ for $T'<t<T$.
So the system dynamics again reduce to \eqref{eq:no-zero} in this range
and a telescopic summation gives
\begin{align*}
Q(T) < Q(T') + (T-T')(s-f) + 2(T+T')\frac{\epsilon}{4}.
\end{align*}
Using $Q(T')=0$ and $T'\le T$, we get for all $T\ge N_\epsilon$,
\begin{align*}
\frac{Q(T)}{T} < \frac{T-T'}{T}(s-f) + \epsilon,
\end{align*}
and since $s<f$ and $Q(T)\ge0$, this gives us
\begin{align*}
\left|\frac{Q(T)}{T}\right| < \epsilon
\end{align*}
for all $T\ge N_\epsilon$, which concludes the proof.
\end{proof}

One of the assumptions we made in order to arrive at Claim~\ref{claim:shortfall-equation}
is that the system has infinite buffer capacity, i.e., $Q(\cdot)$ can grow without bound.
While this is indeed a reasonable assumption in some applications, it may not be valid 
in general.
If our buffer only has finite capacity $B$, the dynamics would change as
\begin{align*}
Q(t+1) = \min\{\left(Q(t)+S(t)-F(t)\right)^+, B\}.
\end{align*}
Retracing the steps in the proof of Claim~\ref{claim:shortfall-equation},
we instead end up with
\begin{align*}
\kappa(t)\ge Q(t+1) - (Q(t)+S(t)-F(t)),
\end{align*}
giving us
\begin{align*}
\bar{\kappa} \ge (f-s)^+.
\end{align*}
In this case, even if $s>f$, i.e, the service is greater than the consumption,
it is still possible to have non-zero average shortfall.
Thus it becomes more important to consider the buffer occupancy while scheduling
so as to not provide wasteful service to an already full buffer.

\bibliographystyle{IEEEtran}
\bibliography{refs}

\begin{thebibliography}{10}
\providecommand{\url}[1]{#1}
\csname url@samestyle\endcsname
\providecommand{\newblock}{\relax}
\providecommand{\bibinfo}[2]{#2}
\providecommand{\BIBentrySTDinterwordspacing}{\spaceskip=0pt\relax}
\providecommand{\BIBentryALTinterwordstretchfactor}{4}
\providecommand{\BIBentryALTinterwordspacing}{\spaceskip=\fontdimen2\font plus
\BIBentryALTinterwordstretchfactor\fontdimen3\font minus
  \fontdimen4\font\relax}
\providecommand{\BIBforeignlanguage}[2]{{%
\expandafter\ifx\csname l@#1\endcsname\relax
\typeout{** WARNING: IEEEtran.bst: No hyphenation pattern has been}%
\typeout{** loaded for the language `#1'. Using the pattern for}%
\typeout{** the default language instead.}%
\else
\language=\csname l@#1\endcsname
\fi
#2}}
\providecommand{\BIBdecl}{\relax}
\BIBdecl

\bibitem{JainK2017}
P.~Jain and P.~Kar, ``Non-convex optimization for machine learning,''
  \emph{Found. Trends Mach. Learn.}, vol.~10, no. 3-4, pp. 142--363, Dec. 2017.

\bibitem{DauphinPGCGB2014}
Y.~N. Dauphin \emph{et~al.}, ``Identifying and attacking the saddle point
  problem in high-dimensional non-convex optimization,'' \emph{Adv. Neural Inf.
  Process. Syst.}, vol.~27, Dec. 2014.

\bibitem{AnandkumarG2016}
A.~Anandkumar and R.~Ge, ``Efficient approaches for escaping higher order
  saddle points in non-convex optimization,'' in \emph{Conf. Learn. Theory},
  Jun. 2016, pp. 81--102.

\bibitem{DanilovaDGGGKS2022}
M.~Danilova \emph{et~al.}, ``Recent theoretical advances in non-convex
  optimization,'' in \emph{High-Dimensional Optimization and Probability: With
  a View Towards Data Science}, Apr. 2022, vol. 191, pp. 79--163.

\bibitem{HouH2015}
I.-H. Hou and P.-C. Hsieh, ``{QoE}-optimal scheduling for on-demand video
  streams over unreliable wireless networks,'' in \emph{Proc. ACM Int. Symp.
  Mobile Ad Hoc Netw. Comput.}, Jun. 2015, pp. 207--216.

\bibitem{SinghK2015}
R.~Singh and P.~R. Kumar, ``Optimizing quality of experience of dynamic video
  streaming over fading wireless networks,'' in \emph{Proc. IEEE Conf. Decis.
  Control (CDC)}, Dec. 2015, pp. 7195--7200.

\bibitem{BodasSYS2014}
S.~Bodas, S.~Shakkottai, L.~Ying, and R.~Srikant, ``Scheduling in multi-channel
  wireless networks: Rate function optimality in the small-buffer regime,''
  \emph{IEEE Trans. Inf. Theory}, vol.~60, no.~2, pp. 1101--1125, Feb. 2014.

\bibitem{BhimarajuZC2022}
A.~Bhimaraju, A.~A. Zacharias, and A.~Chatterjee, ``Multichannel resource
  allocation for smooth streaming: Non-convexity and bandits,'' \emph{IEEE
  Trans. Comm.}, vol.~70, no.~8, pp. 5085--5097, Aug. 2022.

\bibitem{NeelyTD2010}
M.~J. Neely, A.~S. Tehrani, and A.~G. Dimakis, ``Efficient algorithms for
  renewable energy allocation to delay tolerant consumers,'' in \emph{Proc.
  IEEE Int. Conf. Smart Grid Commun.}, Oct. 2010, pp. 549--554.

\bibitem{MohamadTL2021}
F.~Mohamad, J.~Teh, and C.-M. Lai, ``Optimum allocation of battery energy
  storage systems for power grid enhanced with solar energy,'' \emph{Energy},
  vol. 223, p. 120105, May 2021.

\bibitem{HillSCGG2012}
C.~A. Hill, M.~C. Such, D.~Chen, J.~Gonzalez, and W.~M. Grady, ``Battery energy
  storage for enabling integration of distributed solar power generation,''
  \emph{IEEE Trans. Smart Grid}, vol.~3, no.~2, pp. 850--857, Jun. 2012.

\bibitem{ChenSP2013}
J.~Chen, A.~S. Sohal, and D.~I. Prajogo, ``Supply chain operational risk
  mitigation: a collaborative approach,'' \emph{Int. J. Prod. Res.}, vol.~51,
  no.~7, pp. 2186--2199, Apr. 2013.

\bibitem{ClayG1997}
G.~R. Clay and F.~Grange, ``Evaluating forecasting algorithms and stocking
  level strategies using discrete-event simulation,'' in \emph{Proc. Conf.
  Winter Simul.}, Dec. 1997, pp. 817--824.

\bibitem{GuptaM2003}
A.~Gupta and C.~D. Maranas, ``Managing demand uncertainty in supply chain
  planning,'' \emph{Comput. Chem. Eng.}, vol.~27, no. 8-9, pp. 1219--1227, Sep.
  2003.

\bibitem{BoydV2004}
S.~Boyd and L.~Vandenberghe, \emph{Convex Optimization}.\hskip 1em plus 0.5em
  minus 0.4em\relax Cambridge University Press, 2004.

\bibitem{Neely2010}
M.~J. Neely, \emph{Stochastic Network Optimization with Application to
  Communication and Queueing systems}.\hskip 1em plus 0.5em minus 0.4em\relax
  Morgan \& Claypool, 2010.

\bibitem{KodyWM2022}
A.~Kody, A.~West, and D.~K. Molzahn, ``Sharing the load: Considering fairness
  in de-energization scheduling to mitigate wildfire ignition risk using
  rolling optimization,'' in \emph{Proc. IEEE Conf. Decis. Control (CDC)}, Dec.
  2022, pp. 5705--5712.

\bibitem{ProtterM2012}
M.~H. Protter and C.~B. Morrey, \emph{Intermediate Calculus}.\hskip 1em plus
  0.5em minus 0.4em\relax Springer Science \& Business Media, 1985.

\end{thebibliography}

\end{document}